\newcommand{\theauthor}{Geir Bogfjellmo and H\aa kon Marthinsen}
\newcommand{\thetitle}{High order symplectic partitioned Lie group methods}
 \newcommand{\thesubject}{Lie groups;mechanics;symplecticity;order conditions}
 \newcommand{\thekeywords}{Symplectic integrators; Lie groups; Order theory}
 \newcommand{\theamssubjectclassification}{Primary 65P10; Secondary 37M15, 70G65, 70G75, 70HXX}
\renewenvironment{proof}[1][\proofname]{\par
  \pushQED{\qed}%
  \normalfont \topsep6\p@\@plus6\p@\relax
  \trivlist
  \item[\hskip\labelsep
        \sffamily\bfseries
    #1\@addpunct{.}]\ignorespaces
}{%
  \popQED\endtrivlist\@endpefalse
}
\DeclareMathSymbol{\partial}{\mathord}{otherletters}{64}
\def\resetMathstrut@{%
  \setbox\z@\hbox{%
    \mathchardef\@tempa\mathcode`\(\relax
    \def\@tempb##1"##2##3{\the\textfont"##3\char"}%
    \expandafter\@tempb\meaning\@tempa \relax
  }%
  \ht\Mathstrutbox@1.2\ht\z@ \dp\Mathstrutbox@1.2\dp\z@
}
\newdimen\arrowsize
\newdimen\unit
\tikzstyle{cdp}=[auto, semithick, -fourier, font=\scriptsize]
\tikzstyle{cdp2}=[auto, semithick, serif cm-fourier, font=\scriptsize]
\tikzstyle{cdm}=[matrix of math nodes, row sep=3em, column sep=3em, text height=1.5ex, text depth=0.25ex]
\newcommand{\theoremname}{Theorem}
\newcommand{\propositionname}{Proposition}
\newcommand{\corollaryname}{Corollary}
\newcommand{\lemmaname}{Lemma}
\newcommand{\definitionname}{Definition}
\newcommand{\remarkname}{Remark}
\newcommand{\exercisename}{Exercise}
\newcommand{\examplename}{Example}
\newcommand*{\fancyrefthmlabelprefix}{thm}
\newcommand*{\fancyrefprnlabelprefix}{prn}
\newcommand*{\fancyrefcorlabelprefix}{cor}
\newcommand*{\fancyreflemlabelprefix}{lem}
\newcommand*{\fancyrefdeflabelprefix}{def}
\newcommand*{\fancyrefremlabelprefix}{rem}
\newcommand*{\fancyrefexelabelprefix}{exe}
\newcommand*{\fancyrefexalabelprefix}{exa}
    \newcommand*{\Frefthmname}{\theoremname}%
    \newcommand*{\Frefprnname}{\propositionname}%
    \newcommand*{\Frefcorname}{\corollaryname}%
    \newcommand*{\Freflemname}{\lemmaname}%
    \newcommand*{\Frefdefname}{\definitionname}%
    \newcommand*{\Frefremname}{\remarkname}%
    \newcommand*{\Frefexename}{\exercisename}%
    \newcommand*{\Frefexaname}{\examplename}%
    \newcommand*{\frefthmname}{\MakeLowercase{\Frefthmname}}%
    \newcommand*{\frefprnname}{\MakeLowercase{\Frefprnname}}%
    \newcommand*{\frefcorname}{\MakeLowercase{\Frefcorname}}%
    \newcommand*{\freflemname}{\MakeLowercase{\Freflemname}}%
    \newcommand*{\frefdefname}{\MakeLowercase{\Frefdefname}}%
    \newcommand*{\frefremname}{\MakeLowercase{\Frefremname}}%
    \newcommand*{\frefexename}{\MakeLowercase{\Frefexename}}%
    \newcommand*{\frefexaname}{\MakeLowercase{\Frefexaname}}%
\DeclareMathOperator{\ad}{ad}
\DeclareMathOperator{\Ad}{Ad}
\DeclareMathOperator{\LieSO}{SO}
\DeclareMathOperator{\Lieso}{\mathfrak{so}}
\DeclareMathOperator{\dexp}{dexp}
\DeclareMathOperator{\diag}{diag}
\DeclareMathOperator{\OO}{\mathcal{O}}
\newcommand{\ee}{\mathrm{e}}
\newcommand{\dd}{\mathrm{d}}
\newcommand{\id}{\mathrm{id}}
\newcommand{\II}{\bm{I}}
\newcommand{\RR}{\mathbb{R}}
\newcommand{\DD}{\mathbf{D}}
\newcommand{\g}{\mathfrak{g}}
\newcommand{\norm}[1]{\lVert #1 \rVert}
\newcommand{\defeq}{\coloneqq}
\newcommand{\diff}[2]{\frac{\dd #1}{\dd #2}}
\newcommand{\pdiff}[2]{\frac{\partial #1}{\partial #2}}
\newcommand{\from}{\mathpunct{:}}
\newcommand{\trans}{^\mathrm{T}}
\newcommand{\coT}{T^{*}\!}
\newcommand{\hot}{\text{h.o.t.}}
\begin{document}

\newtheoremstyle{thmstyle} 
	{\topsep}              
	{\topsep}              
	{\itshape}             
	{}                     
	{\bfseries\sffamily}   
	{.}                    
	{.5em}                 
	{\thmname{#1}\thmnumber{ #2}\thmnote{ (#3)}} 
\theoremstyle{thmstyle}
\newtheorem{theorem}{\theoremname}[section]
\newtheorem{proposition}[theorem]{\propositionname}
\newtheorem{corollary}[theorem]{\corollaryname}
\newtheorem{lemma}[theorem]{\lemmaname}

\newtheoremstyle{defstyle}
	{\topsep}
	{\topsep}
	{}
	{}
	{\bfseries\sffamily}
	{.}
	{.5em}
	{\thmname{#1}\thmnumber{ #2}\thmnote{ (#3)}}
\theoremstyle{defstyle}
\newtheorem{definition}[theorem]{\definitionname}
\newtheorem{remark}[theorem]{\remarkname}
\newtheorem{example}[theorem]{\examplename}
\newtheorem{exercise}{\exercisename}[section]


\title{\thetitle}
\author{Geir Bogfjellmo\footnote{ \href{mailto:bogfjell@math.ntnu.no}{\nolinkurl{bogfjell@math.ntnu.no}}, Phone +47 73 59 17 53, Fax +47 73 59 35 24 (Corresponding author)}\, and H\aa kon Marthinsen\footnote{\href{mailto:hakonm@math.ntnu.no}{\nolinkurl{hakonm@math.ntnu.no}}} \\ \small{Department of Mathematical Sciences, NTNU, N--7491 Trondheim, Norway}}
\date{\small{23 April 2014}}
\maketitle



\begin{abstract}
        \noindent In this article, a unified approach to obtain symplectic integrators on $\coT G$ from Lie group integrators on a Lie group~$G$ is presented.
        The approach is worked out in detail for symplectic integrators based on Runge--Kutta--Munthe-Kaas methods and Crouch--Grossman methods.
        These methods can be interpreted as symplectic partitioned Runge--Kutta methods extended to the Lie group setting in two different ways.
        In both cases, we show that it is possible to obtain symplectic integrators of arbitrarily high order by this approach.
\end{abstract}

\noindent{\textbf{Keywords:}} \thekeywords

\noindent{\textbf{Mathematics Subject Classification (2010):}} \theamssubjectclassification

%
%
%

\section{Introduction}


\subsection{Motivation and background}
In general, an ordinary differential equation (ODE) can be described by a vector field on a smooth manifold where solutions of the ODE are integral curves of the vector field.
Numerical approximation of solutions of ODEs is an old field of study, and a plethora of methods for obtaining numerical solutions exist.
However, most of these methods assume that the manifold is Euclidean space.
If the manifold is not Euclidean space, it is possible to embed the manifold in Euclidean space, and extend the vector field on the manifold to a vector field in Euclidean space such that the integral curves are ensured to remain in the image of the embedding.
A standard numerical algorithm (e.g.\ a Runge--Kutta method) will in general result in discrete points which do not lie in the image of the embedding. 
An improvement of this approach is to use projection methods to obtain solutions on the manifold.
These approaches, though simple, suffer from the problem that the numerical solutions depend on the particular choice of embedding, and on the particular extension of the vector field.

One aspect of geometric numerical integration is to exploit structure on the manifold to define numerical methods that are intrinsic to the manifold (i.e.\ do not depend on a particular embedding).
This structure can for instance be that of a Lie group acting on the manifold.
The action of a Lie group $G$ on a manifold $M$ is a smooth mapping $\Psi \from G \times M \to M$ which respects the group structure on $G$.
If the action is transitive, then the derivative with respect to the first component of $\Psi$ at the group identity $e$ is a surjective vector bundle morphism $\g \times M \to TM$.
Any vector field $X$ on $M$ can then be lifted (possibly in a non-unique manner) to a section of the vector bundle $\g \times M$.
The combination of this lifting and standard charts $\g \to G$, form the basis of several classes of Lie group methods. 
Among them are the Crouch--Grossman (CG) methods \cite{crouch93} and the Runge--Kutta--Munthe-Kaas (RKMK) methods \cite{munthekaas99}.
For a more detailed discussion of Lie group methods, we refer to the survey article by Iserles et al.\ \cite{iserles00} and the references therein.

Another aspect of geometric numerical integration is symplecticity of numerical integrators.
Many important problems from physics can be formulated as Hamiltonian ODEs on cotangent bundles over manifolds.
The flow maps of these ODEs are symplectic, that is, they preserve the canonical two-form on the cotangent bundle.
For Hamiltonian ODEs, it is beneficial to use symplectic integrators, due to the near-preservation of energy and excellent long-term behaviour of the numerical solutions \cite[Chapter~VI]{hairer06}.
Hamilton's principle states that the solution of a Lagrangian (in many cases also Hamiltonian) system moves along a path which extremizes the action integral $S = \int_0^T L\bigl(q(t), \dot{q}(t)\bigr) \, \dd t $ among all paths~$q$ with fixed end points.

One technique for deriving symplectic methods is based on the notion of discretizing Hamilton's principle, that is, replacing the action integral with a discrete action sum, and extremizing over all discrete paths or sequences of points~$q_0, q_1, \dotsc, q_N$ with fixed end points.
These methods are known as \emph{variational methods} or \emph{variational integrators}.
Variational methods are guaranteed to be symplectic since the terms $L_h(q_{k - 1}, q_k)$ of the discrete action sum can be interpreted as generating functions (of the first type) for the numerical flow map.
Variational methods have been studied by numerous authors, we refer to the review article by Marsden and West \cite{marsden01} or the more recent encyclopedia article by Leok \cite{leok11} and the references therein for more information about variational methods.

Standard Lie group methods, like RKMK methods or CG methods, give numerical solutions that evolve on the same manifolds as the exact solutions. 
The question of the existence of symplectic methods of formats similar to the ones considered by Crouch and Grossman or by Munthe-Kaas has been a topic of interest for several years.

On $\RR^n$ there is a unified way to extend Runge--Kutta (RK) methods on $\RR^n$ to symplectic methods on $\coT \RR^n$ \cite[Section~VI.6.3]{hairer06}, i.e.\ symplectic partitioned RK (SPRK) methods.
Our goal with this article is to construct and study symplectic methods of arbitrarily high order that are extended from Lie group methods, i.e.\ high-order symplectic Lie group integrators.
We have focused on the case where $M=G$ and the action is simply multiplication in the Lie group.
In the case where $M\neq G$, isotropy complicates matters.
The technique of extremizing a discrete action sum still yields symplectic mappings in the case $M\neq G$, but the presence of isotropy complicates the analysis of these integrators.
The details of the isotropy case will hopefully be addressed in a later article.

The idea of constructing variational methods from Lie group methods has previously been considered by several authors.
Bou-Rabee and Marsden proposed in 2009 to base variational methods on RKMK methods \cite{bourabee09}, and present a class of methods of first and second order.
Methods of a similar type are also considered in the survey article by Celledoni, Marthinsen and Owren \cite{celledoni13}.
In the present article, this idea is pursued further to obtain methods of arbitrarily high order.

It is known to the authors that a different, but related approach to variational Lie group methods has been studied by Leok and collaborators.
Their approach is based on approximating the curve~$q$ in a finite-dimensional function space, resulting in Galerkin Lie group variational integrators.
The idea appears already in Leok's doctoral thesis \cite[Section 5.3]{leok04}, and also in other articles by Leok and co-authors.
A more detailed study of this approach can be found in an article by Hall and Leok~\cite{hall14}.

The rest of Section~1 is an introduction to ODEs on a Lie group~$G$, the Hamilton--Pon\-try\-a\-gin~(HP) principle and the equivalent HP~equations, and variational integrators in general.
Section~2 begins by introducing a group structure on $\coT G$, or equivalently, on $G \times \g^{*}$ and a function~$f \from  G \times \g^{*} \to \g \times \g^{*}$ which together fully describe ODEs on $\coT G$.
Next, we introduce the general format for our integrators.
In Section~3, we first show that a subclass of our integrators that have been studied before~\cite{celledoni13,bourabee09} can not obtain higher than second order on general Hamiltonian problems.
We then show that our integrators can not obtain higher order than the underlying Lie group integrators.
In Section~4, we present two classes of higher order integrators which are based on RKMK integrators and CG methods respectively.
In Section~5, we show that both classes of methods from Section~4 can obtain arbitrarily high order, and we present general order conditions for the methods based on RKMK~integrators, and conditions for order 1--3 for the CG-based~integrators.
We test the two classes of methods numerically in Section~6, and show that they both achieve the correct order and that they both have small energy errors over long time.
Finally, in Section~7 we conclude and mention some possible topics for further work.

\subsection{ODEs on Lie groups}
Let $G$ be a finite-dimensional Lie group and $\g$ its associated Lie algebra.
We denote right-multiplication with $g \in G$ as $R_g$ and left-multiplication with $g$ as $L_g$.
We use dot notation to denote translation in the tangent bundle, i.e.
\begin{gather*}
	g \cdot v = T L_g v, \qquad v \cdot g = T R_g v, \qquad v \in TG, \\
	\intertext{and in the cotangent bundle}
	g \cdot p = \coT L_{g^{-1}} p, \qquad p \cdot g = \coT R_{g^{-1}} p, \qquad p \in \coT G.
\end{gather*}
We also need the notation $\Ad_g \defeq TL_g \circ TR_{g^{-1}}$.
All autonomous ODEs on $G$ can be written as
\begin{equation}
	\label{eq:ODE-on-Lie-group}
	\dot g = f(g) \cdot g, \qquad g(0) = g_0,
\end{equation}
where $g$ is a curve in $G$ and the map~$f \from G \to \g$ is determined uniquely by the vector field.
We can solve this kind of equation numerically using Lie group methods \cite{iserles00}.
Here we have chosen the \emph{right-trivialized} form of this equation. 
We could also have used the left-trivialized form~$\dot{g} = g\cdot f(g)$ which would have resulted in only minor changes to the formulae presented later in the article.

Since we are interested in solving Hamiltonian ODEs using Lie group methods, we need a group structure on the cotangent bundle of $G$, as well as the map $f$ that corresponds to this type of ODEs.

\subsection{Hamilton--Pontryagin mechanics}
Lagrangian mechanics on $G$ is formulated in terms of a Lagrangian~$L \from TG \to \RR$.
Hamilton's principle states that the dynamics is given by the curve~$q \from \RR \to G$ that extremizes the \emph{action integral}
\[
	S_\mathrm{H} = \int_0^T L(q, \dot q) \, \dd t,
\]
where the endpoints $q(0)$ and $q(T)$ are kept fixed.
In \cite[Theorem~3.4]{bourabee09} it was shown that this is equivalent to the Hamilton--Pontryagin (HP) principle, which states that the dynamics is given by extremizing
\[
	S_\mathrm{HP} = \int_0^T \left(L(q, v) + \langle p, \dot q - v \rangle\right) \, \dd t,
\]
where $v \in T_q G$, $p \in T^{*}_q G$ are varied arbitrarily, and the endpoints of $q$ are kept fixed.
Here, we denote the natural pairing of covectors and vectors by $\langle \cdot, \cdot \rangle$.
This action integral leads to dynamics formulated on $\coT G$.

To simplify further calculations, it is convenient to right-trivialize $\coT G$ to $G \times \g^{*}$ via the map~$(q,p_q) \mapsto (q, p_q \cdot q^{-1})$.
Letting $\ell(q, \xi) \defeq L(q, \xi \cdot q)$, $\xi \in \g$, it is easy to show that the HP principle is equivalent to the \emph{right-trivialized HP principle}, which has action integral
\[
	S = \int_0^T \left(\ell(q, \xi) + \langle \mu, \dot q \cdot q^{-1} - \xi \rangle\right) \, \dd t,
\]
where $\xi \from \RR \to \g$ and $\mu \from \RR \to \g^{*}$ are varied arbitrarily, and the endpoints of $q$ are kept fixed.
Taking the variation of $S$, we arrive at the \emph{right-trivialized HP equations}
\begin{equation}
	\label{eq:right-triv-HP}
	\begin{aligned}
		\dot q &= \xi \cdot q, \\
		\dot \mu &= -\ad_\xi^{*} \mu + \bigl(\DD_1 \ell(q, \xi)\bigr) \cdot q^{-1}, \\
		\mu &= \DD_2 \ell(q, \xi),
	\end{aligned}
\end{equation}
where $\ad_x$ is the derivative of $\Ad_{\exp(x)}$ with respect to $x$ at the origin, and $\DD_k \ell$ denotes the partial derivative of $\ell$ with respect to the $k$th variable, i.e.\ a one-form.
This is the ODE on $G \times \g^{*}$ that we need to solve.

\subsection{Variational integrators}

Variational integrators are constructed by discretizing an action integral and then performing extremization with fixed endpoints.
This procedure turns the action integral into an \emph{action sum.}
The \emph{discrete Lagrangian}~$L_h \from G \times G \to \RR$ is an approximation of the action integral over a small time step~$h$,
\[
	L_h(q_{k-1}, q_k) \approx \int_{(k-1)h}^{kh} L(q, \dot q) \, \dd t,
\]
where $q\from \RR \to G$ extremizes the action integral with $q(0)$ and $q(T)$ fixed.
Letting $N = T / h$, the action sum becomes
\[
	S_h = \sum_{k = 1}^N L_h(q_{k-1}, q_k).
\]
Extremizing $S_h$ while keeping $q_0$ and $q_N$ fixed gives us the \emph{discrete Euler--Lagrange equations}
\[
	\DD_1 L_h(q_k, q_{k+1}) + \DD_2 L_h(q_{k-1}, q_k) = 0, \quad 1 \leq k < N.
\]
The discrete Legendre transforms define the \emph{discrete conjugate momenta}
\begin{align*}
	p_k &\defeq \mu_k \cdot q_k \defeq -\DD_1 L_h(q_k, q_{k+1}), \\
	p_{k+1} &\defeq \mu_{k+1} \cdot q_{k+1} \defeq \DD_2 L_h(q_k, q_{k+1}).
\end{align*}
By demanding that these two definitions are consistent, we automatically satisfy the discrete Euler--Lagrange equations.
If we can solve the first equation for $q_{k+1}$, we can use the second one to calculate $\mu_{k+1}$, giving us the variational integrator~$(q_k, \mu_k) \mapsto (q_{k+1}, \mu_{k+1})$.

\section{From a Lie group method to a variational integrator on the cotangent bundle}
\label{sec:varprinciple}
\subsection{Group structure and Hamiltonian ODEs on \texorpdfstring{$G \times \g^{*}$}{G x g*}}

We want to numerically solve the right-trivialized HP equations~\Fref{eq:right-triv-HP}, which can be viewed as a vector field on $G \times \g^{*}$, or equivalently, as the ODE $\dot z = f(z) \cdot z$, where $z \in G \times \g^{*}$ and $f \from G \times \g^{*} \to \g \times \g^{*}$.
For this ODE to make sense, we must choose a group product on $G \times \g^{*}$.
We choose the magnetic extension of $G$, as described by Arnold and Khesin \cite[Section~I.10.B]{arnold98}.
As we will see, this group product makes the right-trivialized HP equations easily expressible as $\dot z = f(z) \cdot z$.\footnote{This group structure was used by Engø in \cite{engoe03} to construct partitioned Runge--Kutta--Munthe-Kaas methods on $\coT G$, without any special regard to symplecticity.}

The magnetic extension assigns the following group product to $\coT G$:
\begin{equation}
	(g, p_g) (h, p_h) \defeq (g h, p_g \cdot h + g \cdot p_h).
\label{eq:groupstructure}
\end{equation}
This group structure is an extension of the group structure on $G$ in the sense that the canonical projection $\coT G \to G$ is a homomorphism of Lie groups.

We note that
\begin{align*}
	(g, p_g) (h, p_h) &= (g h, p_g \cdot h + g \cdot p_h) \\
	&= \Bigl(g h, \bigl(p_g \cdot g^{-1} + \Ad_{g^{-1}}^{*} (p_h \cdot h^{-1})\bigr) \cdot g h\Bigr).
\end{align*}
Thus, letting $\mu = p_g \cdot g^{-1}$ and $\nu = p_h \cdot h^{-1}$, the right-trivialized version of \Fref{eq:groupstructure} is the product on $G \times \g^{*}$ defined by
\[
	(g, \mu) (h, \nu) \defeq \bigl(gh, \mu + \Ad_{g^{-1}}^{*} \nu\bigr).
\]
It can be shown that the Lie algebra associated to the Lie group $G \times \g^{*}$ is $\g \times \g^{*}$ equipped with the Lie bracket $[(\xi, \mu), (\eta, \nu)] = (\ad_\xi \eta, \ad^{*}_\eta \mu - \ad^{*}_\xi \nu)$.

We will also need an expression for $TR_z \zeta$ for $z = (q, \mu) \in G \times \g^{*}$ and $\zeta = (\eta, \nu) \in \g \times \g^{*}$:
\begin{align*}
	TR_z \zeta &= \diff{}{\epsilon} \bigl(\exp(\epsilon \eta), \epsilon \nu\bigr) (q, \mu)\biggr\rvert_{\epsilon = 0} \\
	&= \diff{}{\epsilon} \bigl(\exp(\epsilon \eta) q, \epsilon \nu + \Ad_{\exp(-\epsilon \eta)}^{*} \mu\bigr)\biggr\rvert_{\epsilon = 0} \\
	&= \bigl(\eta \cdot q, \nu - \ad_\eta^{*} \mu\bigr),
\end{align*}

We would now like to use this to write the right-trivialized HP equations \Fref{eq:right-triv-HP} in the form of \Fref{eq:ODE-on-Lie-group}, $\dot z = f(z) \cdot z = T R_z \circ f(z)$.
If the map~$f \from G \times \g^{*} \to \g \times \g^{*}$ satisfies
\begin{equation}
	\label{eq:f-function}
	f\bigl(q, \DD_2 \ell(q, \xi)\bigr) = \Bigl(\xi, \bigl(\DD_1\ell(q,\xi)\bigr) \cdot q^{-1}\Bigr)
\end{equation}
for all $(q, \xi) \in G \times \g$, we see that $\dot z = f(z) \cdot z$, which is exactly what we need.

In many cases, the map $(q,\xi) \mapsto \bigl(q,\DD_2 \ell(q, \xi)\bigr)$ is a diffeomorphism of manifolds.
If this holds, we say that the Lagrangian $\ell$ is \emph{regular}.
If $\ell$ is regular, the Lagrangian problem has an equivalent formulation as a Hamiltonian ODE on $\coT G$,
where 
\[\mathcal{H}\bigl(q, \DD_2 \ell(q, \xi)\bigr)= \langle \DD_2 \ell(q, \xi), \xi \rangle - \ell(q, \xi)\]
and
\begin{equation}
f(q, \mu) = \Bigl( \DD_2 \mathcal{H}(q, \mu), - \bigl(\DD_1 \mathcal{H}(q,\mu)\bigr) \cdot q^{-1} \Bigr).
\label{eq:hamfield}
\end{equation}
For Hamiltonians which arise in this manner, the map  $(q, \mu) \mapsto \bigl(q, \DD_2 \mathcal{H}(q, \mu)\bigr)$ is also a diffeomorphism of manifolds. In fact the map is the inverse of the one above.
Hamiltonians for which this hold are also called \emph{regular}.

%
%


\subsection{General format for our integrators} \label{sec:gen-format}
It is natural to consider discrete Lagrangians based on approximation of the action integral by quadrature.
The procedure adopted in the present article is inspired by the approach in \cite[Section~VI.6.3]{hairer06}, originally found in \cite{suris90}.
In this reference, the symplectic partitioned Runge--Kutta methods are derived by considering the discrete Lagrangian
\begin{equation} 
 L_h(q_0, q_1) = h \sum_{i=1}^s b_i L(Q_i, \dot{Q}_i)
 \label{eq:RKaction}
\end{equation}
where
\[Q_i=q_0+h\sum_{j=1}^s a_{ij} \dot{Q}_j,\]
and $b_i, a_{ij}$ are the coefficients of a Runge--Kutta method.
The $\dot{Q}_i$ are chosen to extremize the sum above under the constraint
\[q_1=q_0+h\sum_{i=1}^sb_i \dot{Q}_i.\]
As shown in \cite[Section~VI.6.3]{hairer06}, the resulting integrator is exactly the partitioned Runge--Kutta integrator where the position is integrated using the original coefficients $b_i,a_{ij}$, while the momentum is integrated by using the coefficents
$\hat{b}_i =b_i, \hat{a}_{ij} = b_j-b_ja_{ji}/b_i.$

In the following, we will generalize the approach used in \cite[Section~VI.6.3]{hairer06} to Lie groups.
Consider the discrete Lagrangian
\begin{equation*} 
	L_h(q_0, q_1) = \hat L_h(Q_1, \dotsc, Q_s, \xi_1, \dotsc, \xi_s) = h \sum_{i = 1}^s b_i \ell(Q_i, \xi_i),
\end{equation*}
where $b_i$ are non-zero quadrature weights, and the auxiliary variables $Q_1, \dotsc, Q_s, \xi_1, \dotsc, \xi_s$ are chosen to extremize $\hat L_h$ under the constraints
\begin{equation}
\begin{aligned}
	Y(Q_1, \dotsc, Q_s, \xi_1, \dotsc, \xi_s, q_0) - \log\bigl(q_1 q_0^{-1}\bigr) &= 0, \\
	X_i(Q_1, \dotsc, Q_s, \xi_1, \dotsc, \xi_s, q_0) - \log\bigl(Q_i q_0^{-1}\bigr) &= 0, \qquad i = 1, \dotsc, s.
\end{aligned}
\label{eq:varconstraints}
\end{equation}
The functions $Y$ and $X_i$ will typically arise from Lie group integrators, as we will see later on.
The formulation of the discrete Lagrangian is that of a constrained optimization problem.
As done in \cite[Section~VI.6.3]{hairer06}, we solve this by introducing Lagrange multipliers.
Let $\Lambda$ be the Lagrange multiplier corresponding to the constraint containing $Y$, and let $\lambda_i$ be the Lagrange multiplier corresponding to the equation containing $X_i$ for $i=1,\dots,s$.
To obtain a variational integrator, we extremize
\[
	\hat L_h - \bigl\langle \Lambda, Y - \log\bigl(q_1 q_0^{-1}\bigr) \bigr\rangle - \sum_{i = 1}^s \bigl\langle \lambda_i, X_i - \log\bigl(Q_i q_0^{-1}\bigr) \bigr\rangle,
\]
while keeping $q_0$ and $q_1$ fixed.
Varying this with respect to $\Lambda$, $\lambda_i$, $\xi_i$ and $Q_i$, we obtain the set of equations
\begin{equation}
\begin{aligned}
	q_1 &= \exp(Y) q_0, \\
	Q_i &= \exp(X_i) q_0,\\
	\pdiff{\hat L_h}{\xi_i} &= \left(\pdiff{Y}{\xi_i}\right)^{*} \Lambda + \sum_j \left(\pdiff{X_j}{\xi_i}\right)^{*} \lambda_j,\\
	\pdiff{\hat L_h}{Q_i} &= \left(\pdiff{Y}{Q_i}\right)^{*} \Lambda + \sum_j \left(\pdiff{X_j}{Q_i}\right)^{*} \lambda_j - \Bigl(\bigl(\dexp_{X_i}^{-1}\bigr)^{*} \lambda_i \Bigr) \cdot Q_i,
\end{aligned}
\label{eq:vareq1}
\end{equation}
for all $i = 1, \dotsc, s$.

To find the integrator based on the discrete Lagrangian~$L_h$, we need to evaluate the partial derivatives of $L_h$ with respect to $q_0$ and $q_1$.
In doing so, we consider $Q_1, \dotsc, Q_s, \xi_1, \dotsc, \xi_s$ as functions of $q_0$ and $q_1$ defined implicitly by \Fref{eq:varconstraints} and \Fref{eq:vareq1}.
The partial derivatives of $L_h$ are then
\begin{equation}
\begin{aligned}
\pdiff{L_h}{q_0} &= \sum_j \left(\pdiff{\hat{L}_h}{Q_j}\circ \pdiff{Q_j}{q_0} + \pdiff{\hat{L}_h}{\xi_j} \circ \pdiff{\xi_j}{q_0}\right), \\
\pdiff{L_h}{q_1} &= \sum_j \left(\pdiff{\hat{L}_h}{Q_j}\circ \pdiff{Q_j}{q_1} + \pdiff{\hat{L}_h}{\xi_j} \circ \pdiff{\xi_j}{q_1}\right).
\end{aligned}
\label{eq:partialcomp}
\end{equation}
The functions $Q_1, \dotsc, Q_s, \xi_1, \dotsc, \xi_s$ satisfy the  constraints \Fref{eq:varconstraints} for all $q_0$, $q_1$.
By differentiating the constraints we see that the identities
\begin{equation}
\begin{aligned}
 0&= \pdiff{Y}{q_0} + \sum_j \left(\pdiff{Y}{Q_j}\circ \pdiff{Q_j}{q_0} + \pdiff{Y}{\xi_j}\circ\pdiff{\xi_j}{q_0}\right) + \dexp_{-Y}^{-1} \circ TR_{q_0^{-1}},\\
 0&=\sum_j \left(\pdiff{Y}{Q_j}\circ \pdiff{Q_j}{q_1} + \pdiff{Y}{\xi_j}\circ \pdiff{\xi_j}{q_1}\right) - \dexp_{Y}^{-1} \circ TR_{q_1^{-1}},\\
 0&= \pdiff{X_i}{q_0} +\sum_j \left(\pdiff{X_i}{Q_j} \circ \pdiff{Q_j}{q_0} + \pdiff{X_i}{\xi_j}\circ \pdiff{\xi_j}{q_0} \right) +\dexp_{-X_i}^{-1} \circ TR_{q_0^{-1}} - \dexp_{X_i}^{-1}\circ TR_{Q_i^{-1}} \circ \pdiff{Q_i}{q_0},\\
0&= \sum_j \left(\pdiff{X_i}{Q_j} \circ \pdiff{Q_j}{q_1} + \pdiff{X_i}{\xi_j}\circ \pdiff{\xi_j}{q_1} \right) -\dexp_{X_i}^{-1} \circ TR_{Q_i^{-1}} \circ \pdiff{Q_i}{q_1}, \qquad i=1,\dotsc, s,
\end{aligned}
\label{eq:varidentities}
\end{equation}
all hold.

We combine the discrete Legendre transforms
\[
-\mu_0 \cdot q_0 = \pdiff{L_h}{q_0}, \qquad \mu_1 \cdot q_1 = \pdiff{L_h}{q_1},
\]
with \Fref{eq:vareq1} and \Fref{eq:partialcomp}, and simplify using \Fref{eq:varidentities} to obtain the equations
\begin{align*}
	\mu_0 &= \left( \left(\pdiff{Y}{q_0}\right)^{*} \Lambda + \sum_j \left(\pdiff{X_j}{q_0}\right)^{*} \lambda_j \right) \cdot q_0^{-1} + \bigl(\dexp_{-Y}^{-1}\bigr)^{*} \Lambda + \sum_j \bigl(\dexp_{-X_j}^{-1}\bigr)^{*} \lambda_j, \\
	\mu_1 &= \bigl(\dexp_Y^{-1}\bigr)^{*} \Lambda.
\end{align*}
Using the identity \Fref{eq:f-function}, we get
\[
	f\bigl(Q_i, \DD_2 \ell(Q_i, \xi_i)\bigr) = \Bigl(\xi_i, \bigl(\DD_1 \ell(Q_i, \xi_i)\bigr) \cdot Q_i^{-1}\Bigr),
\]
and defining $n_i, M_i \in \g^{*}$ by
\[
	\pdiff{\hat L_h}{Q_i} = h b_i \DD_1 \ell(Q_i, \xi_i) = h b_i n_i \cdot Q_i, \qquad \pdiff{\hat L_h}{\xi_i} = h b_i \DD_2 \ell(Q_i, \xi_i) = h b_i M_i,
\]
for $i=1,\dotsc, s,$ we get
\begin{align*}
	h b_i n_i &= \left( \left(\pdiff{Y}{Q_i}\right)^{*} \Lambda + \sum_j \left(\pdiff{X_j}{Q_i}\right)^{*} \lambda_j \right) \cdot Q_i^{-1} - \bigl(\dexp_{X_i}^{-1}\bigr)^{*} \lambda_i, \\
	h b_i M_i &= \left(\pdiff{Y}{\xi_i}\right)^{*} \Lambda + \sum_j \left(\pdiff{X_j}{\xi_i}\right)^{*} \lambda_j.
\end{align*}

Combining everything above, the variational integrator is defined by the set of equations
\begin{equation}
\begin{aligned}
	\mu_0 &= \left( \left(\pdiff{Y}{q_0}\right)^{*} \Lambda + \sum_j \left(\pdiff{X_j}{q_0}\right)^{*} \lambda_j \right) \cdot q_0^{-1} + \bigl(\dexp_{-Y}^{-1}\bigr)^{*} \Lambda + \sum_j \bigl(\dexp_{-X_j}^{-1}\bigr)^{*} \lambda_j, \\
	h b_i n_i &= \left( \left(\pdiff{Y}{Q_i}\right)^{*} \Lambda + \sum_j \left(\pdiff{X_j}{Q_i}\right)^{*} \lambda_j \right) \cdot Q_i^{-1} - \bigl(\dexp_{X_i}^{-1}\bigr)^{*} \lambda_i,\\
	h b_i M_i &= \left(\pdiff{Y}{\xi_i}\right)^{*} \Lambda + \sum_j \left(\pdiff{X_j}{\xi_i}\right)^{*} \lambda_j,\\
	(\xi_i, n_i) &= f(Q_i, M_i),\\
	Q_i &= \exp(X_i) q_0,\qquad \qquad \qquad i=1,\dotsc, s, \\
	q_1 &= \exp(Y) q_0, \\
	\mu_1 &= \bigl(\dexp_Y^{-1}\bigr)^{*} \Lambda.
\end{aligned}
\label{eq:varprinciple}
\end{equation}
Notice that we no longer involve the Lagrangian.
We only need to evaluate the vector field through the map~$f$.
This opens up the possibility of applying the method to degenerate Hamiltonian systems (or indeed to any ODE on $\coT G$).\footnote{Variational methods for degenerate Hamiltonian systems using Type~II generating functions have been proposed by Leok and Zhang \cite{leok11-1}.}

It should be noted that since the integrator can be formulated as a variational integrator on $G$,
the group structure chosen for $\coT G$ in \Fref{eq:groupstructure} is not consequential.
Indeed, the integrator is uniquely defined by \Fref{eq:varconstraints}, \Fref{eq:vareq1}, and \Fref{eq:partialcomp}, which do not depend on the introduced group structure on $\coT G$.
For any choice of group structure on $\coT G$ such that the canonical projection~$\coT G \to G$ is a homomorphism of Lie groups, there is an equivalent formulation of the integrator in \Fref{eq:varprinciple}.
Note that $f$ is defined via the group structure, and a change of group structure would lead to $f$ being changed as well.

%

\section{First and second order integrators}
\label{sec:secondorder}

In the article by Celledoni et al.~\cite{celledoni13}, a special case of variational integrators of the form introduced in the previous section was considered.
These integrators serve as an example of application of the formulae above.
In these methods, let $a_{ij}$ and $b_i$ be the coefficients of an $s$-stage Runge--Kutta method which satisfies $b_i \neq 0$ for all $i$.
Let the discrete Lagrangian be given by
\[L_h(q_0, q_1) = h\sum_{i=1}^s b_i \ell(Q_i, \xi_i),\]
and the constraints by \Fref{eq:varconstraints} and
\[
   Y=h\sum_{i=1}^s b_i \xi_i, \qquad X_i= h\sum_{j=1}^s a_{ij} \xi_j, \qquad i=1,\dotsc, s.
\]
We can see that for $i,j = 1,\dotsc, s,$
\[
\begin{aligned} 
\pdiff{Y}{q_0} &= 0,  & \pdiff{X_j}{q_0} &= 0, \\
\pdiff {Y}{Q_i} &= 0,  & \pdiff{X_j}{Q_i} &= 0, \\
\pdiff{Y}{\xi_i} &= hb_i, & \pdiff{X_j}{\xi_i} &= ha_{ji}.
\end{aligned}
\]
By inserting these into \Fref{eq:varprinciple}, we get the set of equations
\[
\begin{aligned}
	\mu_0 &= \bigl(\dexp_{-Y}^{-1}\bigr)^{*} \Lambda + \sum_j \bigl(\dexp_{-X_j}^{-1}\bigr)^{*} \lambda_j, \\
	h b_i n_i &= - \bigl(\dexp_{X_i}^{-1}\bigr)^{*} \lambda_i, \\
	h b_i M_i &= hb_i \Lambda + \sum_j ha_{ji} \lambda_j, \\
	(\xi_i, n_i) &= f(Q_i, M_i), \\
	Q_i &= \exp(X_i) q_0,  \qquad \qquad \qquad i=1,\dotsc, s,\\
	q_1 &= \exp(Y) q_0, \\
	\mu_1 &= \bigl(\dexp_Y^{-1}\bigr)^{*} \Lambda.
	\end{aligned}
\]
In these equations, $\Lambda$ and $\lambda_j$ can be eliminated, giving the integrator
\begin{equation}
\begin{aligned}
	b_i M_i &= b_i \dexp_{-Y}^{*}\Bigl( \mu_0 + h \sum_j b_j \Ad^{*}_{\exp(X_j)} n_j \Bigr) - h \sum_j b_j a_{ji} \dexp_{X_j}^{*} n_j, \\
	X_i &= h\sum_j a_{ij} \xi_j, \\
	Q_i &= \exp(X_i) q_0, \\
	(\xi_i, n_i) &= f(Q_i, M_i), \qquad \qquad \qquad i=1,\dotsc, s,\\
	Y &= h\sum_j b_j \xi_j,\\
	q_1 &= \exp(Y) q_0, \\
	\mu_1 &= \Ad^{*}_{\exp(-Y)}\Bigl(\mu_0 + h \sum_j b_j \Ad^{*}_{\exp(X_j)} n_j\Bigr).
\end{aligned}
\label{eq:JCP-int}
\end{equation}
Here we have used the identity $\dexp_{x}\circ \dexp^{-1}_{-x} = \Ad_{\exp(x)}$.
Equation~\Fref{eq:JCP-int} is equivalent to the method presented in~\cite[Section~5]{celledoni13}.

Methods of this form suffer from an order barrier.
They can not obtain higher accuracy than second order.
The proof, presented below, is closely related to a similar order barrier for commutator-free Lie algebra methods \cite{celledoni03-1}.

\begin{proposition}
The integrators of the format \Fref{eq:JCP-int} can not achieve higher than second order on general Hamiltonian differential equations.
\label{prn:JCP-order}
\end{proposition}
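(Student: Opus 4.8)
The plan is to prove the order barrier by exhibiting a single Hamiltonian problem on which the local error of \fref{eq:JCP-int} is of order $h^3$. Following the idea used for commutator-free Lie group methods in \cite{celledoni03-1}, I would take a \emph{non-regular} Hamiltonian that is linear in the momentum, $\mathcal H(q,\mu) = \langle \mu, \Xi(q)\rangle$ for a suitable map $\Xi \from G \to \g$. Since $\DD_2\mathcal H(q,\mu) = \Xi(q)$ is independent of $\mu$, the associated $f$ has a $q$-component independent of $\mu$, so the $q$-dynamics decouples into the autonomous Lie group equation $\dot q = \Xi(q)\cdot q$. Restricting \fref{eq:JCP-int} to its $q$-part then collapses to the single-exponential, commutator-free method $\xi_i = \Xi(Q_i)$, $Q_i = \exp(X_i) q_0$, $q_1 = \exp(Y) q_0$ with $X_i = h\sum_j a_{ij}\xi_j$ and $Y = h\sum_j b_j \xi_j$. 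It therefore suffices to show that this $q$-update alone cannot reach order three; the non-regularity costs nothing, because \fref{eq:varprinciple} only ever evaluates $f$ and never requires a Lagrangian.

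Next I would compare the two single exponentials. Writing the exact solution as $q(h) = \exp(\Theta(h)) q_0$ and inverting the relation $\dexp_\Theta(\dot\Theta) = \Xi(\exp(\Theta) q_0)$, a Taylor expansion yields
\[
  \Theta(h) = h\,\Xi_0 + \tfrac{h^2}{2}\,\Xi' + h^3\Bigl(\tfrac16\Xi'' - \tfrac1{12}[\Xi_0,\Xi']\Bigr) + O(h^4),
\]
where $\Xi_0 = \Xi(q_0)$, $\Xi' = \DD\Xi(q_0)[\Xi_0\cdot q_0]$, and $\Xi''$ is a pure derivative term carrying no bracket. The essential feature is the commutator $[\Xi_0,\Xi']$ at order $h^3$, which arises precisely from the non-commutativity of $G$ through the curvature of $\dexp$.

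I would then expand $Y = h\sum_i b_i \Xi(\exp(X_i) q_0)$ in the same fashion. Assuming the order-two conditions $\sum_i b_i = 1$ and $\sum_i b_i c_i = \tfrac12$ (with $c_i = \sum_j a_{ij}$), which are necessary just to be second order, the $h$- and $h^2$-terms of $Y$ coincide with those of $\Theta$. At order $h^3$, however, every contribution to $Y$ is a value of $\Xi$ Taylor-expanded at a point of $G$, hence a linear combination of the elementary differentials $\DD\Xi(q_0)[\Xi'\cdot q_0]$, $\DD^2\Xi(q_0)[\Xi_0\cdot q_0, \Xi_0\cdot q_0]$, and $\DD\Xi(q_0)$ applied to the second-order $\exp$-correction of $\Xi_0$ — none of which is the free bracket $[\Xi_0,\Xi']$. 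Since $\exp$ is a local diffeomorphism with $\Theta, Y = O(h)$, matching $q_1$ and $q(h)$ through order $h^3$ is equivalent to matching $Y$ and $\Theta$ through that order, so the obstruction is genuine as soon as $[\Xi_0,\Xi']$ is linearly independent of the listed derivative terms.

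The main obstacle, and the final step, is to convert this structural observation into a watertight counterexample. I would fix a concrete non-abelian group (for instance $\LieSL(2,\RR)$ or the Heisenberg group) together with an affine $\Xi$, so that $\DD^2\Xi \equiv 0$ and only prescribed one-jet data survive, chosen so that $[\Xi_0,\Xi'] \neq 0$ is independent of the remaining derivative terms. A direct evaluation of the $h^3$ coefficients then shows $Y_3 \neq \Theta_3$ for \emph{every} admissible choice of $a_{ij}, b_i$, so no method of the form \fref{eq:JCP-int} can be third order. The delicate part is exactly this independence claim — that the single-exponential format structurally cannot manufacture the bracket — which is the group-theoretic heart shared with the commutator-free order barrier of \cite{celledoni03-1}.
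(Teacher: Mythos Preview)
Your proposal is correct and follows essentially the same route as the paper: introduce the non-regular Hamiltonian $\mathcal H(q,\mu)=\langle\mu,\Xi(q)\rangle$, observe that the $q$-equation decouples to $\dot q=\Xi(q)\cdot q$, and note that the $q$-part of \fref{eq:JCP-int} reduces to the single-exponential scheme $\xi_i=\Xi(Q_i)$, $Q_i=\exp(h\sum_j a_{ij}\xi_j)q_0$, $q_1=\exp(h\sum_i b_i\xi_i)q_0$, which is exactly an RKMK method with cut-off $r=0$ (equivalently a one-exponential commutator-free method). The paper then simply cites the known order barrier for such methods \cite{owren06}, whereas you reprove that barrier by expanding $\Theta(h)=\log\bigl(q(h)q_0^{-1}\bigr)$ and $Y$ to order $h^3$ and exhibiting a concrete $(G,\Xi)$ on which the commutator term $-\tfrac{1}{12}[\Xi_0,\Xi']$ in $\Theta$ cannot be matched by the purely derivative terms in $Y$. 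Your Heisenberg/affine-$\Xi$ idea works cleanly for this (e.g.\ $q_0=e$, $\Xi_0=E_{12}$, $D\Xi(q_0)[E_{12}\cdot q_0]=E_{23}$, all other first derivatives zero, so all derivative contributions at $h^3$ vanish while $[\Xi_0,\Xi']=E_{13}\neq 0$), and your coefficient $-\tfrac{1}{12}$ is correct. The difference is purely one of packaging: the paper outsources this last step to the literature, you unpack it.
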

%
%
%
\begin{proof}
 The proof proceeds by applying the variational method \Fref{eq:JCP-int} to a particular class of regular Hamiltonian problems and a particular choice of starting values. 
 We show that in this case, the Lie group part of the solution has an error of at most second order, thus the variational method is at most of second order as well.
 
 Let a Hamiltonian on $G\times \g^{*}$ be given by $\mathcal{H}(q,\mu) = \langle \mu, v(q) \rangle+T(\mu)$, where $v\from G \to \g$ is smooth, but otherwise arbitrary, and $T\from \g^{*} \to \RR$ is a nondegenerate quadratic function of $\mu$.
 Using \Fref{eq:hamfield}, we find that the corresponding Hamiltonian vector field is
 \begin{equation}
 f(q, \mu) =  \left( v(q) + \frac{\dd T}{\dd \mu}, -\Biggl(\biggl(\pdiff{v}{q}\biggr)^{*} \mu\Biggr)\cdot q^{-1}\right),
 \label{eq:hamfieldspec}
 \end{equation}
and the differential equation is
 \[ \begin{aligned}
\dot{q} &= \biggl(v(q) + \frac{\dd T}{\dd \mu}\biggr)\cdot q, \\
\dot{\mu} &= -\Biggl(\biggl(\pdiff{v}{q}\biggr)^{*} \mu\Biggr)\cdot q^{-1}- \ad^*_{v(q)+ \frac{\dd T}{\dd \mu}}\mu.
\end{aligned} \]
We note that $\frac{\dd T}{\dd \mu}$ and $U(q)\mu\defeq-\Bigl(\bigl(\pdiff{v}{q}\bigr)^{*} \mu\Bigr)\cdot q^{-1}$ are both linear in $\mu$, in particular $\frac{\dd T}{\dd \mu}\bigr\rvert_{\mu=0}=0$.

A particular class of solutions to this ODE consists of those solutions which satisfy $\mu(t)=0$ for all $t$.
For these solutions $q(t)$ solves the ODE~$\dot{q} = v(q) \cdot q$.
We want to show that the numerical solution from \Fref{eq:JCP-int}, when applied to this problem, preserves the invariant $\mu=0$, and that the method reduces to a conventional Lie group method in this case.
If we apply \Fref{eq:JCP-int} to the Hamiltonian vector field \Fref{eq:hamfieldspec} and set $\mu_0=0$, we get, among others, the equation $n_i= U(Q_i)M_i$.
Inserting this into the first equation of \Fref{eq:JCP-int}, we get
\[
\begin{aligned}
	b_i M_i &= h b_i \dexp_{-Y}^{*} \sum_j b_j \Ad^{*}_{\exp(X_j)} U(Q_j)M_j - h \sum_j b_j a_{ji} \dexp_{X_j}^{*} U(Q_j)M_j, \qquad i=1,\dotsc, s. \\
\end{aligned}
\]
Clearly, this system of equations has $M_i=0$ for $i=1,\dotsc,s$ as a solution.
Additionally, if we assume that $Y$ and $X_j$ go to zero as $h$ goes to zero, $M_i=0$ is the only solution for small enough step-length~$h$.
Therefore, $n_i=U(Q_i)M_i=0$, and $\mu_1=0$.
The remaining equations of \Fref{eq:JCP-int} are
\[
 \begin{aligned}
  	X_i &= h\sum_j a_{ij} \xi_j, \\
	Q_i &= \exp(X_i) q_0, \\
	\xi_i &=v(Q_i)+ \frac{\dd T}{\dd \mu}\biggr\rvert_{\mu=M_i}=v(Q_i), \qquad \qquad \qquad i=1,\dotsc, s,\\
	Y &= h\sum_j b_j \xi_j,\\
	q_1 &= \exp(Y) q_0. \\
 \end{aligned}
\]

We recognise these equations as a commutator-free Lie group method with one exponential, or equivalently, an RKMK method with cut-off parameter 0, applied to the ODE $\dot{q} = v(q)\cdot q$.
As explained in \cite{owren06}, commutator-free methods with one exponential cannot satisfy the third order conditions, and the solution is at most second order accurate.

\end{proof}
By repeating the argument with any variational integrator of the form described in \Fref{eq:varprinciple}, we get a generalization.
\begin{proposition}
\label{prn:orderlimit}
 A variational integrator on $\coT G$ of the form \Fref{eq:varprinciple} based on a Lie group integrator can not achieve higher order than the underlying Lie group integrator.
\end{proposition}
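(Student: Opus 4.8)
The plan is to run the argument of \fref{prn:JCP-order} verbatim, but to read off the decoupling directly from the general system \fref{eq:varprinciple} rather than from its RKMK specialization \fref{eq:JCP-int}. First I would take the same non-regular Hamiltonian $\mathcal{H}(q,\mu) = \langle \mu, v(q) \rangle$ with $v \from G \to \g$ smooth but otherwise arbitrary. As computed in the proof of \fref{prn:JCP-order}, the associated vector field obtained from \fref{eq:f-function} has first slot equal to $v(q)$, which is independent of $\mu$; only the second slot of $f(q,\mu)$ carries momentum dependence.

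The crux is that this single feature forces a decoupling in \fref{eq:varprinciple}. The stage relation $(\xi_i, n_i) = f(Q_i, M_i)$ now reduces in its first component to $\xi_i = v(Q_i)$, which does not involve $M_i$. Hence the equations
\[
\xi_i = v(Q_i), \qquad Q_i = \exp(X_i)\, q_0, \qquad q_1 = \exp(Y)\, q_0, \qquad i = 1,\dotsc,s,
\]
form a closed subsystem: by \fref{eq:varconstraints} the functions $X_i$ and $Y$ depend only on $q_0$ and on the $G$-side data $(Q_1,\dotsc,Q_s,\xi_1,\dotsc,\xi_s)$, never on $\Lambda$, the $\lambda_j$, the $M_i$, the $n_i$ or $\mu_0$. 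Thus the stages $Q_i$, the stage vectors $\xi_i$ and the update $q_1$ are determined by $q_0$ alone, independently of every momentum quantity in \fref{eq:varprinciple}.

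I would then identify this decoupled subsystem with the underlying Lie group integrator: its defining functions are precisely the $X_i$ and $Y$ above, so the displayed relations are exactly that integrator applied to $\dot q = v(q)\cdot q$, i.e.\ to \fref{eq:ODE-on-Lie-group} with $f = v$. Let $p$ be the order of the Lie group integrator, and choose $v$ so that the method attains order $p$ but no higher on this ODE; such $v$ exists because $v$ ranges over all admissible right-hand sides of \fref{eq:ODE-on-Lie-group}. For this $v$ the $q$-component of the variational integrator coincides with the order-$p$ (and not higher) Lie group approximation, so its error in the $G$-factor is already of the generic size $O(h^{p+1})$. Consequently the variational integrator on $\coT G$, whose output contains this $q$-component, can have order at most $p$, which is the assertion. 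The only place the generalization does work beyond \fref{prn:JCP-order} is in justifying the decoupling abstractly—that once $\xi_i = v(Q_i)$ is imposed, $X_i$ and $Y$ enter the $G$-equations through momentum-free arguments $(Q_j,\xi_j,q_0)$ only. This is exactly where the hypothesis that the scheme is a genuine cotangent lift of a Lie group integrator (so that $X_i$ and $Y$ carry no momentum dependence) is essential, and it is the hinge of the argument.
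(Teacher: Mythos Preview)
Your proposal is correct and follows exactly the route the paper indicates: the paper's own proof of \fref{prn:orderlimit} consists of the single sentence ``By repeating the argument with any variational integrator of the form described in \fref{eq:varprinciple}, we get a generalization,'' and you have carried out that repetition in detail. The key observation you isolate---that by \fref{eq:varconstraints} the functions $X_i$ and $Y$ depend only on $(Q_j,\xi_j,q_0)$, so once $\xi_i = v(Q_i)$ the $G$-equations close up into the underlying Lie group method---is precisely the mechanism the paper intends.
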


We should note that first and second order methods of the format described in \Fref{eq:JCP-int} do exist.
Specifically, a method of that format is first order if $\sum_{i=1}^s b_i = 1$ and second order if, in addition, $\sum_{i,j=1}^s b_i a_{ij} = 1/2$.
The proof is a special case of \Fref{thm:orderRKMK} with cut-off parameter~$r = 0$.

\begin{example}[The midpoint method] \label{exa:midpoint}
	Let us choose $s = 1$.
		The method is of second order if and only if we choose $b_1 = 1$ and $a_{11} = 1/2$.
	This method is also symmetric, since the substitutions~$h \to -h$, $q_0 \leftrightarrow q_1$, $\mu_0 \leftrightarrow \mu_1$, $Y \to -Y$ in \Fref{eq:JCP-int} yields the same method after some manipulation of the equations.
	This property is utilized in \Fref{sec:order-tests} to achieve high order by composition.
\end{example}

\section{Higher order integrators}
The methods discussed in the previous section were limited to at most second order.
To obtain higher order integrators, we consider two approaches, based on two well-known classes of Lie group integrators.

The first approach is based on the Runge--Kutta--Munthe-Kaas (RKMK) methods.
This approach was already considered by Bou-Rabee and Marsden in 2009 \cite{bourabee09}.
The work in the present article builds on the work by Bou-Rabee and Marsden, and examines in detail the case when the cut-off parameter $r$ ($q$ in \cite{munthekaas99, bourabee09}) in the RKMK method is larger than $0$, and provides a complete order theory for variational methods based on RKMK methods.

The second approach is based on Crouch--Grossman (CG) methods. 
This approach has to our knowledge not been explored before.
We show that these methods can achieve arbitrarily high order, but the complete order theory of these methods remains unresolved.

\subsection{Variational Runge--Kutta--Munthe-Kaas integrators}
\label{sec:varRKMK}

A popular class of Lie group integrators is the class of RKMK integrators. 
For our purposes, these integrators can be written
\begin{equation}
 \begin{aligned}
    x_i &= h\sum_{j=1}^s a_{ij} \dexp^{-1}_{(r),x_j} \xi_j, \\
    Q_i &= \exp(x_i)q_0, \\
    \xi_i &= f(Q_i), \qquad \qquad \qquad i=1,\dotsc, s,\\
    Y &= h\sum_{i=1}^s b_i \dexp^{-1}_{(r),x_i} \xi_i, \\
    q_1 &= \exp(Y)q_0,
 \end{aligned}
\label{eq:RKMKint}
\end{equation}
where 
\[
\dexp^{-1}_{(r), x}=  \id- \frac 12 \ad_{x} + \sum_{k=2}^r \frac{B_k}{k!} (\ad_{x})^k
\]
is the Taylor series approximation to $\dexp^{-1}_{x}$, and $a_{ij}, b_i$ are the coefficients of a Runge--Kutta method.
If the RK method is of order $p$ and $r\ge p-2$, the resulting Lie group integrator is of order $p$ as well \cite[Theorem~IV.8.4]{hairer06}.

Variational methods based on RKMK methods were considered by Bou-Rabee and Marsden in \cite{bourabee09}, though the methods they present in detail are at most second order, since they only consider the case $r=0$.
The methods in this case are essentially the methods considered in \Fref{sec:secondorder}. 
For $r>0$, some complications arise for the variational integrator, since the $x_i$ are not explicitly given by $\xi_1,\dotsc, \xi_s$.
Our solution is to treat both $x_i$ and $\xi_i$ as unknowns and the equations for $x_i$ in \Fref{eq:RKMKint} as restrictions.
The Lagrange multipliers $\lambda_i$ corresponding to the equations for $x_i$ cannot be eliminated from the equations in a general manner, so the dimension of the non-linear equation to be solved at each step is larger than that of the corresponding symplectic method applied to $T^\ast \RR^n$, or the simpler integrator with $r=0$.

In the variational integrator we set the discrete Lagrangian to
\[L_h= h\sum_{j=1}^s b_j \ell(Q_j, \xi_j),\]
and let the constraints be given by \Fref{eq:varconstraints}, that is
\begin{align*}
    q_1 &= \exp(Y)q_0, \\
    Q_i &= \exp(X_i)q_0, \\
\shortintertext{and}
Y &= h\sum_{j=1}^s b_j \dexp^{-1}_{(r),x_j} \xi_j, \\
X_i &= h\sum_{j=1}^s a_{ij} \dexp^{-1}_{(r),x_j} \xi_j, \quad \quad \quad i=1,\dotsc, s,
\end{align*}
where $x_i = \log\bigl(Q_i q_0^{-1}\bigr)$. 
Note that on the solution set of the constraints, $X_i = x_i$.

Applying the variational equations from \Fref{eq:varprinciple}, the integrator is given by
\begin{equation}
  \begin{aligned}
  \mu_0 &= \Bigl(\bigl(\dexp^{-1}_{-Y}\bigr)^\ast - h\sum_i b_i \bigl(\dexp_{-X_i}^{-1}\bigr)^\ast \circ P^\ast_{(r)}(X_i, \xi_i) \Bigr)\Lambda \\
         &\phantom{=}+\sum_j \Bigl(\bigl(\dexp_{-X_j}^{-1}\bigr)^\ast - h \sum_i a_{ji}  \bigl(\dexp_{-X_i}^{-1}\bigr)^\ast \circ P^\ast_{(r)}(X_i, \xi_i) \Bigr) \lambda_j, \\
  hb_i n_i &= -\bigl(\dexp^{-1}_{X_i}\bigr)^\ast \lambda_i + h b_i \bigl(\dexp_{X_i}^{-1}\bigr)^\ast \circ P^\ast_{(r)}(X_i, \xi_i)\Lambda \\
         &\phantom{=}+ h \sum_j a_{ji}  \bigl(\dexp_{X_i}^{-1}\bigr)^\ast \circ P^\ast_{(r)}(X_i, \xi_i)\lambda_j, \\
  hb_i M_i &= h\bigl(\dexp^{-1}_{(r),X_i}\bigr)^\ast\Bigl(b_i\Lambda + \sum_j a_{ji} \lambda_j\Bigr), \\
  Q_i &= \exp(X_i)q_0, \\
  (\xi_i, n_i) &= f(Q_i, M_i), \qquad \qquad \qquad i=1,\dotsc, s,\\
  q_1 &= \exp(Y)q_0, \\
  \mu_1 &= \bigl(\dexp^{-1}_{Y}\bigr)^\ast \Lambda,
  \end{aligned}
\label{eq:VRKMK1}
\end{equation}
where $P_{(r)}^{\ast}(x, \xi)$ is a polynomial in $\ad_{x}^\ast$ and $\ad_{\xi}^\ast$ of degree $r$, defined as the adjoint of the partial derivative of $\dexp^{-1}_{(r), x} \xi$ with respect to $x$. 
Specifically,
\[
 \begin{aligned}
    P_{(0)}^\ast(x,\xi) &= 0, \\
    P_{(1)}^\ast(x,\xi) &= \frac 12 \ad^\ast_{\xi}, \\
    P_{(2)}^\ast(x,\xi) &= \frac 12 \ad^\ast_{\xi}-\frac 16 \ad^\ast_\xi \ad^\ast_x + \frac 1{12} \ad^\ast_x \ad^\ast_\xi, \\
    P_{(r)}^\ast(x,\xi) &=  \frac 12 \ad^\ast_{\xi}- \sum_{k=2}^{r} \frac{B_k}{k!} \sum_{i=0}^{k-1} \ad^\ast_{\ad^i_x \xi} \bigl( \ad^\ast_x \bigr)^{k-i-1}.
\end{aligned}
\]
By applying $\Ad^\ast_{\exp(X_i)}$ to both sides of the second equation in \Fref{eq:VRKMK1}, we see that the first equation can be simplified. 
Using this and rearranging the rest of the equations while assuming that $b_i \neq 0$, we arrive at the set of equations
\begin{equation}
\begin{aligned}
  \Lambda &= \dexp^\ast_{-Y}\Bigl(\mu_0 + h  \sum_i b_i\Ad^\ast_{\exp(X_i)}n_i\Bigr),\\
  \lambda_i &= -hb_i \dexp_{X_i}^\ast n_i + hP^\ast_{(r)}(X_i, \xi_i)\Bigl(b_i \Lambda + \sum_j a_{ji} \lambda_j\Bigr),\\
  M_i &= \frac{1}{b_i}\bigl(\dexp^{-1}_{(r),X_i}\bigr)^\ast\Bigl(b_i\Lambda + \sum_j a_{ji} \lambda_j\Bigr), \\
  X_i &= h \sum_j a_{ij} \dexp^{-1}_{(r), X_j} \xi_j,\\
  (\xi_i, n_i) &= f\bigl(\exp(X_i) q_0, M_i\bigr), \qquad \qquad \qquad i=1,\dotsc, s,\\
  Y &= h \sum_i b_i \dexp^{-1}_{(r), X_i} \xi_i, \\
  q_1 &= \exp(Y)q_0, \\
  \mu_1 &= \Ad^{*}_{\exp(-Y)}\bigl(\mu_0 + h  \sum_i b_i\Ad^\ast_{\exp(X_i)}n_i\bigr),
\end{aligned}
\label{eq:VRKMKgenq}
\end{equation}
which define a symplectic integrator on $\coT G$.
The identity $\dexp_{x}\circ \dexp^{-1}_{-x} = \Ad_{\exp(x)}$ was used to obtain the last line of the equations above.
We call the integrators defined by \Fref{eq:VRKMKgenq} \emph{variational Runge--Kutta--Munthe-Kaas methods} or VRKMK methods for short.
One can easily check that if the Lie group is abelian, a VRKMK method simplifies to a symplectic partitioned Runge--Kutta method.

In implementations of these methods, it is required that $\exp\from\g \to G$ and $\dexp^{*} \from \g\times \g^{*} \to \g^{*}$ are calculated to machine precision to obtain symplecticity.
In the numerical tests of \Fref{sec:num}, we choose $G = \LieSO(3)$ and use Rodrigues' formula \cite[Section~9.2]{marsden99-1} to calculate these expressions.
In a general setting, calculating these expressions usually involves analytic functions of matrices.
The equations defining the integrator can be solved as a set of non-linear equations in the unknowns $X_i, M_i$ and $\lambda_i$, $i=1,\dotsc, s$, as the other quantities in \Fref{eq:VRKMKgenq} are given explicitly in terms of the aforementioned variables as well as $q_0$ and $\mu_0$. 
If $G$ is an $n$-dimensional Lie group, this is a total of $3ns$~scalar unknowns.
Other choices of independent and dependent unknowns are possible. One could reduce the number of unknowns  by starting with coefficients of an explicit RK method and thereby obtain explicit expressions for $x_i$ and be able to eliminate the $\lambda_i$ in the integrator.
The variational method based on an explicit RK method would still be implicit, however, and due to the order conditions presented later in this article, the increased number of stages required for a particular order would offset the reduction in number of unknowns per stage obtained by using an explicit method as the underlying method, so it is unclear if this simplification is useful in practice.

\subsubsection{Alternative approach} \label{sec:pade}
The authors have discovered an alternative approach which reduces the number of unknowns in the equations \Fref{eq:VRKMKgenq}.
The alternative approach requires a modification of the variational principle described in \Fref{sec:varprinciple}, and full details and analysis goes beyond the scope of this article.

A modification of the RKMK methods can be obtained by replacing the Taylor approximation~$\dexp^{-1}_{(r)}$ in \Fref{eq:RKMKint} with a Pad\'{e} approximation.
After trivial manipulations, the modified RKMK method is
\begin{equation*}
 \begin{aligned}
    x_i &= h\sum_j a_{ij} \tilde{\xi}_j, \\
    Q_i &= \exp(x_i)q_0, \\
    \dexp_{(r),x_i} \tilde{\xi}_i &=  f(Q_i), \qquad \qquad \qquad i=1,\dotsc, s,\\
    Y &= h\sum_i b_i \tilde{\xi}_i, \\
    q_1 &= \exp(Y)q_0,
 \end{aligned}
\end{equation*}
where $\dexp_{(r),\xi} = 1+ \frac 12 \ad_\xi +\dotsb \frac{1}{(r+1)!} \ad_\xi^r$.
In this formulation, the $x_j$ are explicit in the $\tilde{\xi}_j$, so there is no need to introduce restrictions for the equations  $x_i = h\sum_j a_{ij} \tilde{\xi}_j$.

The discrete Lagrangian in this formulation is
\[L_h(q_0,q_1)=h\sum_j b_j \ell(Q_j, \dexp_{(r),x_j} \tilde{\xi}_j),
\]
which is not of the format \Fref{eq:RKaction} discussed in \Fref{sec:varprinciple}.
Therefore the general formulae for integrators \Fref{eq:varprinciple} derived earlier do not apply.
However, the general idea can still be pursued, and the resulting integrator can be formulated on the Hamiltonian side.



\subsection{Variational Crouch--Grossman integrators}

The methods of Crouch and Grossman form another important class of Lie group methods.
Crouch and Grossman formulated their integrators in terms of rigid frames, i.e. finite collections of vector fields on a manifold.
On a Lie group, a suitable rigid frame is a basis for the right-invariant vector fields on $G$ corresponding to a basis of $\g$.
In this setting, the Crouch--Grossman methods can be defined as follows.
Let $b_i$, $a_{ij}$ be the coefficients of an $s$-stage RK method.
The Crouch--Grossman method~\cite[Section~IV.8.1]{hairer06} with the same coefficients is defined by the equations
\begin{align*}
 Q_i &= \exp(ha_{is} \xi_s) \dotsm \exp(ha_{i1} \xi_1) q_0, \\
 \xi_i &= f(Q_i), \\
q_1 &= \exp(hb_s \xi_s) \dotsm \exp(hb_1 \xi_1)q_0.
\end{align*}
The order of a CG method is determined by the order conditions developed in \cite{owren99}.

Using the general format~\Fref{eq:varprinciple}, we set the discrete Lagrangian to
\begin{equation}
	L_h(q_0, q_1) = h \sum_{i = 1}^s b_i \ell(Q_i, \xi_i), \label{eq:cg-discrete-lagrangian}
\end{equation}
with $b_i \neq 0$ and constraints given by \Fref{eq:varconstraints}, that is
\begin{align*}
    q_1 &= \exp(Y)q_0, \\
    Q_i &= \exp(X_i)q_0, \\
    \shortintertext{and}
    Y   &= \log\bigl(\exp(h b_s \xi_s) \dotsm \exp(h b_1 \xi_1)\bigr), \\
    X_i &= \log\bigl(\exp(h a_{is} \xi_s) \dotsm \exp(h a_{i1} \xi_1)\bigr).
\end{align*}
Inserting this into the equations defining a variational integrator \Fref{eq:varprinciple}, we obtain
\begin{align*}
	\mu_0 &= \bigl(\dexp_{-Y}^{-1}\bigr)^{*} \Lambda + \sum_j \bigl(\dexp_{-X_j}^{-1}\bigr)^{*} \lambda_j, \\
	h b_i n_i &= -\bigl(\dexp_{X_i}^{-1}\bigr)^{*} \lambda_i, \\
	h b_i M_i &= h b_i \dexp_{h b_i \xi_i}^{*} \circ \Ad^{*}_{\exp(h b_s \xi_s) \dotsm \exp(h b_{i + 1} \xi_{i + 1})} \circ \bigl(\dexp_Y^{-1}\bigr)^{*} \Lambda \\
	&\phantom{=} + h \sum_j a_{ji} \dexp_{h a_{ji} \xi_i}^{*} \circ \Ad^{*}_{\exp(h a_{js} \xi_s) \dotsm \exp(h a_{j,i + 1} \xi_{i + 1})} \circ \bigl(\dexp_{X_j}^{-1}\bigr)^{*} \lambda_j, \\
	(\xi_i, n_i) &= f(Q_i, M_i), \\
	Q_i &= \exp(X_i) q_0, \\
	q_1 &= \exp(Y) q_0, \\
	\mu_1 &= \bigl(\dexp_Y^{-1}\bigr)^{*} \Lambda.
\end{align*}
Eliminating $\Lambda$ and $\lambda_j$, and rearranging, we get the integrator
\begin{equation}
\begin{gathered}
	q_1 = q^s, \qquad q^j = \exp(h b_j \xi_j) q^{j - 1}, \qquad q^0 = q_0, \\
	Q_i = Q_{is}, \qquad Q_{ij} = \exp(h a_{ij} \xi_j) Q_{i, j - 1}, \qquad Q_{i0} = q_0, \\
	(\xi_i, n_i) = f(Q_i, M_i), \\
	\bar \mu_0 = \Ad^{*}_{q_0} \mu_0, \qquad \bar \mu_1 = \Ad^{*}_{q_1} \mu_1, \qquad \bar n_i = \Ad^{*}_{Q_i} n_i, \\
	\bar \mu_1 = \bar \mu_0 + h \sum_{j = 1}^s b_j \bar n_j, \\
	M_i = \dexp^{*}_{h b_i \xi_i} \circ \Ad^{*}_{(q^i)^{-1}} \bar \mu_1 - h \sum_{j = 1}^s \frac{b_j a_{ji}}{b_i} \dexp^{*}_{h a_{ji} \xi_i} \circ \Ad^{*}_{Q_{ji}^{-1}} \bar n_j,
\end{gathered}
\label{eq:vcg-integrator}
\end{equation}
for all $i = 1, \dotsc, s$.
We call the integrators defined by \Fref{eq:vcg-integrator} \emph{variational Crouch--Grossman methods} or VCG methods for short.
The last equation in \Fref{eq:vcg-integrator} can also be written as
\begin{equation} \label{eq:M_j-with-mu_0}
	M_i = \dexp^{*}_{h b_i \xi_i} \circ \Ad^{*}_{(q^i)^{-1}} \bar \mu_0 + h \sum_{j = 1}^s b_j \left(\dexp^{*}_{h b_i \xi_i} \circ \Ad^{*}_{(q^i)^{-1}} - \frac{a_{ji}}{b_i} \dexp^{*}_{h a_{ji} \xi_i} \circ \Ad^{*}_{Q_{ji}^{-1}}\right) \bar n_j,
\end{equation}
which we will need in the order analysis of \Fref{prn:vcg-order}.

In the case that the Lie group is abelian, the integrator simplifies to the same symplectic, partitioned RK method as in the abelian case for the VRKMK integrator.

%

\section{Order analysis}
In analyzing the order of variational methods we use variational error analysis as described by Marsden and West \cite[Section~2.3]{marsden01}.
We recite two definitions from this reference which are useful in the following sections.
The \emph{exact discrete Lagrangian} is given by
\[L_h^{\mathrm{E}}(q_0, q_1)= \int_{0}^h L\bigl(q(t),  \dot{q}(t)\bigr)\, \dd t,\]
where $q(t)$ is the solution to the Euler--Lagrange equations with $q(0)=q_0$, $q(h)=q_1$.
A discrete Lagrangian $L_h$ is said to be \emph{of order $p$} if
\[L_h\bigl(q(0), q(h)\bigr) = L_h^{\mathrm{E}}\bigl(q(0), q(h)\bigr) + \mathcal{O}(h^{p+1}),\]
for all solutions $q(t)$ of the Euler--Lagrange equations.
The following theorem is a special case of \cite[Theorem~2.3.1]{marsden01}.%
\footnote{Patrick and Cuell \cite{cuell09} demonstrate an inaccuracy in the proof in \cite{marsden01}. However, they also show that the relevant result still holds.}
\begin{theorem}
\label{thm:varorder}
Given a regular Lagrangian $L$ and a discrete Lagrangian $L_h$ of order $p$, then the symplectic integrator defined by $L_h$ is of order~$p$.
\end{theorem}

Both classes of methods presented in this article depend on Butcher coefficients $a_{ij}$ and $b_i$.
Furthermore, when applied to an abelian Lie group (for instance $\RR^n$), both classes become symplectic, partitioned RK methods where the position is integrated with the RK method with coefficients $a_{ij}$ and $b_i$, while the momentum is integrated with the RK method with coefficients~$\hat{a}_{ij} = b_j - b_j a_{ji} / b_i$ and $\hat{b}_i = b_i$.
The order conditions for SPRK methods have been explored in detail by Murua \cite{murua97}.
Since an abelian Lie group is a special case, the order of the SPRK method is an upper bound for the order of the variational Lie group method with the same coefficients.
The order of the underlying Lie group method is also an upper bound according to \Fref{prn:orderlimit}.
\subsection{Order of VRKMK integrators}
\label{sec:orderRKMK}

The VRKMK methods described in \Fref{sec:varRKMK} are fully described by the Butcher coefficients~$a_{ij}$ and $b_i$, and the cut-off parameter $r$.
The cut-off parameter $r$ limits the order of the RKMK method \Fref{eq:RKMKint} on $G$. 
The order of the RKMK method is the minimum of the order of the RK method based on the same coefficients and $r+2$. \cite[Section IV.8.2]{hairer06}

As explained above, the order of the VRKMK method is bounded from above by the order of the SPRK method based on the same Butcher coefficients, and by the order of the RKMK method.
Since the order conditions for RK methods for a particular order form a subset of the order conditions for the SPRK method, we can a priori say that the order of the VRKMK method is bounded from above by the order of the SPRK method and $r+2$.
\Fref{thm:orderRKMK} states that in the case of a regular Lagrangian, the order of the VRKMK method is in fact the minimum of these two bounds.
The proof of this theorem relies on the following lemma.
\begin{lemma}
 Assume that the continuous Lagrangian is regular, and that the SPRK method based on the coefficients $a_{ij}$ and $b_i$ is of order $d$.
 Then the discrete Lagrangian of the SPRK method \Fref{eq:RKaction} is also of order $d$.
 \label{lem:VRKorder}
\end{lemma}
\begin{proof}
 Let $H(q,p)$ be a regular Hamiltonian, $L(q,\dot{q})=\langle p,\dot{q} \rangle -H(q,p)$ the corresponding Lagrangian, and $\bigl(q(t), p(t)\bigr)$ an exact solution to the Hamiltonian system.
 The resulting integrator is order $d$ accurate if and only if the original coefficients $b_i$ and $a_{ij}$ together with $\hat{b}_i=b_i$ and $\hat{a}_{ij}=b_j-b_j a_{ji}/b_i$ fulfil the order conditions up to order $d$ for a partitioned Runge--Kutta method.
 We will apply the partitioned Runge--Kutta method to the system
 \begin{align*}
 \begin{bmatrix} \dot{q} \\ \dot{S} \end{bmatrix}  &=\begin{bmatrix} \pdiff{H}{p}\\ L\bigl(q,\dot q\bigr) \end{bmatrix},\\
  \dot{p}&=-\pdiff{H}{q},
 \end{align*}
 where the $(q,S)$-component is integrated using the coefficients $b_i$, $a_{ij}$, and the $p$-component is integrated using the coefficients $\hat{b}_i$, $\hat{a}_{ij}$.\footnote{Since $\hat{b}_i=b_i$ and the right hand side is independent of $S$, we could instead have grouped $S$ with $p$ without any change.}
 These are simply the Hamiltonian equations augmented with the differential equation for the action integral $S$.
 
 As starting values, we use $q_0 = q(0), p_0 = p(0)$ and $S_0 = S(0) = 0$.
 The exact solution of the system at $t=h$ is given by the solution to the Hamiltonian equation, $q(h)$, $p(h)$, and 
 \[S(h)=\int_0^h L(q, \dot q) \, \dd t = L_h^\mathrm{E} \bigl(q_0, q(h)\bigr).\]
 The numerical solution obtained with one step of the partitioned Runge--Kutta method is, using the notation of \Fref{sec:gen-format},
 \begin{gather*}
  q_1= q(h) + \mathcal{O}(h^{d+1}), \qquad p_1 = p(h) + \mathcal{O}(h^{d+1}) \\
  S_1 = h\sum_{i=1}^s b_i L(Q_i, \dot{Q}_i) = L_h(q_0, q_1) =  L_h^\mathrm{E}\bigl(q_0, q(h)\bigr)+\mathcal{O}(h^{d+1}),
 \end{gather*}
 since the method is order $d$.
 Using Taylor series expansion and that $p_1= \DD_2L_h(q_0, q_1)$, we see that $L_h(q_0, q_1) - L_h\bigl(q_0, q(h)\bigr)=\langle p_1, q_1-q(h) \rangle + \mathcal{O}\bigl(q_1-q(h)\bigr)^2 = \mathcal{O}(h^{d+1})$, which completes the proof.
\end{proof}

\begin{theorem}
 If the symplectic, partitioned Runge--Kutta method based on the coefficients $a_{ij}$ and $b_i$ is of order at least $p$, and the cut-off parameter $r$ satisfies $r\geq p-2$, then the variational Runge--Kutta--Munthe-Kaas method with the same coefficients is at least of order $p$ for regular Hamiltonians.
\label{thm:orderRKMK}
\end{theorem}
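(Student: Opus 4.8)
The plan is to reduce the order statement to a claim about the discrete Lagrangian and then to dispose of the two independent sources of error — the non-commutativity of $G$ and the truncation of $\dexp^{-1}$ — separately. By the variational error analysis of Marsden and West recited above, the symplectic integrator \fref{eq:VRKMKgenq} has order $p$ as soon as its discrete Lagrangian $L_h = h\sum_i b_i \ell(Q_i, \xi_i)$ is equivalent to one of order $p$, i.e.\ as soon as $L_h\bigl(q(0), q(h)\bigr) = \int_0^h \ell\bigl(q(t), \xi(t)\bigr)\,\dd t + \mathcal{O}(h^{p+1})$ along exact solutions. So I would work entirely with the discrete Lagrangian and forget the momenta, which are slaved to it by the discrete Legendre transform.

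First I would treat the idealized method in which $\dexp^{-1}_{(r)}$ is replaced by the exact $\dexp^{-1}$ (formally $r=\infty$). Fixing $q_0$ and introducing logarithmic coordinates $y = \log\bigl(q q_0^{-1}\bigr)$ on the step, the change of internal variables $w_i \defeq \dexp^{-1}_{X_i}\xi_i$ turns the stage relations $X_i = h\sum_j a_{ij}\dexp^{-1}_{X_j}\xi_j$ and $Y = h\sum_i b_i\dexp^{-1}_{X_i}\xi_i$ into the flat relations $X_i = h\sum_j a_{ij}w_j$, $Y = h\sum_i b_i w_i$, while the objective becomes $h\sum_i b_i \tilde\ell(X_i, w_i)$ with $\tilde\ell(y,w)\defeq \ell\bigl(\exp(y)q_0, \dexp_y w\bigr)$. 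Hence the idealized discrete Lagrangian is exactly the variational Runge--Kutta discrete Lagrangian for the transformed Lagrangian $\tilde\ell$ on the vector space $\g$, evaluated between the endpoints $0$ and $Y = \log\bigl(q_1 q_0^{-1}\bigr)$. Since $y\mapsto \exp(y)q_0$ is a diffeomorphism preserving the action, one has $\int_0^h \ell\,\dd t = \int_0^h \tilde\ell(y,\dot y)\,\dd t$ along the corresponding exact solutions, and $\tilde\ell$ is regular whenever $\ell$ is. The hypothesis that the variational RK method has order $\geq p$ — an algebraic property of $a_{ij}, b_i$ valid for every regular Lagrangian — then gives that the idealized $L_h$ agrees with the exact action up to $\mathcal{O}(h^{p+1})$.

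Next I would reinstate the truncation and estimate its effect as a perturbation. The only place $r$ enters \fref{eq:VRKMKgenq} is through $\dexp^{-1}_{(r),x} - \dexp^{-1}_x = -\sum_{k>r}\frac{B_k}{k!}(\ad_x)^k$; applied to the $\mathcal{O}(1)$ vectors $\xi_j$ with $\mathcal{O}(h)$ arguments $X_j$, this is $\mathcal{O}(h^{r+1})$. In the stage equations it is multiplied by an explicit factor $h$, so it perturbs the defining system by $\mathcal{O}(h^{r+2})$; since that system has invertible Jacobian for small $h$, the stage values $(X_i, \xi_i)$ move by $\mathcal{O}(h^{r+2})$. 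As $L_h$ equals $h\sum_i b_i\ell(Q_i,\xi_i)$ at the solution (the constraint terms vanishing there), the truncated and idealized discrete Lagrangians differ by $h\cdot\mathcal{O}(h^{r+2}) = \mathcal{O}(h^{r+3})$. The assumption $r\geq p-2$ is precisely $r+3\geq p+1$, so this difference is $\mathcal{O}(h^{p+1})$ and the truncated $L_h$ still matches the exact action to order $p+1$. Invoking the equivalence theorem once more yields order $\geq p$ for the VRKMK method, consistent with the upper bound from \Fref{prn:orderlimit}.

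The main obstacle is the second step: recognizing that the substitution $w_i = \dexp^{-1}_{X_i}\xi_i$ together with logarithmic coordinates turns the idealized variational RKMK scheme into an honest flat variational RK scheme for $\tilde\ell$, and checking carefully that this correspondence respects the variational error analysis — preservation of the action under the coordinate change, regularity of $\tilde\ell$, and the fact that exact solutions correspond under the change of variables. Once this identification is secure, the truncation estimate is routine bookkeeping in powers of $h$.
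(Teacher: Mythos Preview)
Your first step coincides with the paper's: the substitution $w_i = \dexp^{-1}_{X_i}\xi_i$ together with logarithmic coordinates on $G$ turns the idealized VRKMK$(\infty)$ discrete Lagrangian into the variational RK discrete Lagrangian for $\tilde\ell(y,\dot y)=\ell\bigl(\exp(y)q_0,\dexp_y\dot y\bigr)$ on the vector space $\g$, whence the VRK order hypothesis gives $L_h^{(\infty)} = L_h^{\mathrm E} + \mathcal O(h^{p+1})$. That part is correct and is exactly the paper's argument.

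The gap is in your truncation step. You assert that ``the only place $r$ enters \fref{eq:VRKMKgenq} is through $\dexp^{-1}_{(r),x}$,'' but this is not true: the equation for $\lambda_i$ in \fref{eq:VRKMKgenq} contains $P^\ast_{(r)}(X_i,\xi_i)$, the adjoint of the $x$-derivative of $\dexp^{-1}_{(r),x}\xi$. Differentiation costs one power of $h$: while $\dexp^{-1}_{(r),X_i}-\dexp^{-1}_{X_i}=\mathcal O(h^{r+1})$, one only has $P^\ast_{(r)}(X_i,\xi_i)-P^\ast_{(\infty)}(X_i,\xi_i)=\mathcal O(h^{r})$, so after the explicit factor $h$ that equation is perturbed by $\mathcal O(h^{r+1})$. (Equivalently, in the fixed-$(q_0,q_1)$ picture you adopt, the KKT system includes the stationarity conditions in $Q_i$, and those bring in $P_{(r)}$ through the $x_i$-dependence of the constraints.) A plain bounded-inverse-Jacobian argument then gives only $\delta\xi_i=\mathcal O(h^{r+1})$, hence $L_h^{(r)}-L_h^{(\infty)}=h\cdot\mathcal O(h^{r+1})=\mathcal O(h^{r+2})$, one power short of the $\mathcal O(h^{r+3})$ you claim.

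The paper does not estimate the discrete Lagrangian in the second step at all; it compares the two integrators with the same initial data $(q_0,\mu_0)$ and runs an inductive bootstrap through \fref{eq:VRKMKgenq}: from $\delta\xi_i,\delta n_i=\mathcal O(h^k)$ one gets $\delta X_i,\delta Y,\delta\Lambda=\mathcal O(h^{k+1})+\mathcal O(h^{r+2})$, then $\delta\lambda_i,\delta M_i=\mathcal O(h^{k+1})+\mathcal O(h^{r+1})$, and finally $\delta\xi_i,\delta n_i=\mathcal O(h^{k+1})+\mathcal O(h^{r+1})$. The induction terminates with $\delta\xi_i=\mathcal O(h^{r+1})$ but, crucially, $\delta Y,\delta\Lambda=\mathcal O(h^{r+2})$; since $(q_1,\mu_1)$ depend only on $Y$ and $\Lambda$, the output difference is $\mathcal O(h^{r+2})$. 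What you labelled ``routine bookkeeping'' is precisely this stratified argument separating the internal quantities that drift at $\mathcal O(h^{r+1})$ from the output quantities that drift at $\mathcal O(h^{r+2})$; without it the extra power of $h$ is not available.
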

\begin{proof}
The proof consists of two steps. 
We introduce the limit case where the cut-off parameter~$r$ goes to infinity, that is, the method where $\dexp^{-1}_{(r), x}$ is replaced by $\dexp^{-1}_x$.
To distinguish between the two RKMK methods, we will denote the ``full'' RKMK method by RKMK$(\infty)$, and the RKMK method with cut-off parameter~$r$ by RKMK$(r)$.
The variational integrators based on the two methods are denoted VRKMK$(\infty)$ and VRKMK$(r)$, respectively.

In the first step, we show that the discrete Lagrangian which defines the VRKMK$(\infty)$ method is of order $p$.
The proof relies on two facts.
Firstly, that the discrete Lagrangian of the SPRK method is of order $p$.
Secondly, that the discrete Lagrangians of the VRKMK$(\infty)$ and of a special case of the SPRK method are obtained as extremal values of the same object function and under the same constraints.

In the second part, we show that if we apply the VRKMK$(\infty)$ and VRKMK$(r)$ methods to the same initial values, their difference after one step goes to zero as $\mathcal{O}(h^{r+3})$.

Let $q\from [0, a] \rightarrow G$ be a solution to the Euler--Lagrange equation with $q(0) = q_0$, and assume that $a>0$ is sufficiently small such that $\sigma(t) =\log\bigl(q(t)q_0^{-1}\bigr)$ is uniquely defined for all $t\in [0, a]$.
The exact discrete Lagrangian is given by
\begin{equation}
L_h^{\text{E}}\bigl(q_0, q(h)\bigr) = \int_0^h \ell\bigl(q(t), \xi(t)\bigr) \, \dd t,
\label{eq:LEgroup}
\end{equation}
where $\xi(t) = \dot{q}(t)\cdot q(t)^{-1}$.
If we define $\tilde{\ell}\from T\mathfrak{g} \to \RR$ as
\begin{equation}
\tilde{\ell}(\sigma, \dot{\sigma}) = \ell\bigl(\exp(\sigma)q_0, \dexp_\sigma \dot{\sigma}\bigr),
\label{eq:ltildedef}
\end{equation}
we can rewrite \Fref{eq:LEgroup} as $L_h^{\text{E}}\bigl(q_0, q(h)\bigr) = \tilde{L}_h^{\text{E}}\bigl(0, \sigma(h)\bigr)$ where
\begin{equation*}
\tilde{L}_h^{\text{E}}\bigl(0, \sigma(h)\bigr) = \int_0^h \tilde{\ell}\bigl(\sigma(t), \dot{\sigma}(t)\bigr) \, \dd t.
\label{eq:LEalg}
\end{equation*}
This is an exact discrete Lagrangian on the vector space $\mathfrak{g}$, which we approximate by the action sum arising from the underlying RK method,
\begin{equation}
 \tilde{L}_h^{\text{RK}}\bigl(0, \sigma(h)\bigr) = h\sum_i b_i \tilde{\ell}(y_i, \eta_i),
 \label{eq:LRKalg}
\end{equation}
where $y_i = h\sum_j a_{ij} \eta_j$, $i=1,\dotsc, s$ and the sum is extremized under the constraint $\sigma(h) = h \sum_i b_i \eta_i$.
Under the assumptions of the theorem, the order of the SPRK method is at least $p$, so by \Fref{lem:VRKorder}, the discrete Lagrangian of the SPRK method is order $p$ accurate,
\[\tilde{L}_h^\text{RK}\bigl(0, \sigma(h)\bigr) = \tilde{L}_h^\text{E}\bigl(0, \sigma(h)\bigr) + \mathcal{O}(h^{p+1}).\]
Inserting \Fref{eq:ltildedef} into \Fref{eq:LRKalg} gives 
\begin{equation}
 L_h^{\text{RK}}\bigl(q_0, q(h)\bigr) = \tilde L_h^\text{RK}\bigl( 0, \sigma(h) \bigr) = h\sum_i b_i \ell\bigl(\exp(y_i) q_0, \dexp_{y_i} \eta_i\bigr),
\label{eq:LRKgroup}
\end{equation}
where the sum is extremized under the constraint $q(h) = \exp\bigl(h \sum_i b_i \eta_i\bigr) q_0$.

Now, the discrete action sum arising from RKMK$(\infty)$ is
\begin{equation}
 L_h^{\text{RKMK}(\infty)}\bigl(q_0, q(h)\bigr) = h\sum_i b_i \ell\bigl(\exp(X_i) q_0, \xi_i\bigr),
\label{eq:LRKMKinfgroup}
\end{equation}
which is extremized under the constraints
\[
 \begin{aligned}
  X_i &= h\sum_j a_{ij} \dexp^{-1}_{X_j} \xi_j, \quad \quad i=1,\dotsc, s, \\
  q(h) &= \exp\Bigl(h\sum_i b_i \dexp^{-1}_{X_i} \xi_i\Bigr)q_0.
 \end{aligned}
\]
We see that under the identifications
\[y_i = X_i, \qquad \eta_i = \dexp^{-1}_{X_i} \xi_i,\]
the objective functions \Fref{eq:LRKgroup} and \Fref{eq:LRKMKinfgroup} are identical and are extremized under the same constraints. 
Thus their extremal values are identical and we have proved
\[ 
 \begin{aligned}
    L_h^{\text{RKMK}(\infty)}\bigl(q_0, q(h)\bigr) &= L_h^{\text{RK}}\bigl(q_0, q(h)\bigr)=\tilde{L}_h^{\text{RK}}\bigl(0, \sigma(h)\bigr) \\
					 &=\tilde{L}_h^\text{E}\bigl(0, \sigma(h)\bigr) + \mathcal{O}(h^{p+1})= L_h^{\text{E}}\bigl(q_0, q(h)\bigr) + \mathcal{O}(h^{p+1}),
\end{aligned}
\]
concluding the first part of the proof.

For the second part of the proof, we consider the integrator in \Fref{eq:VRKMKgenq} and the variational integrator based on RKMK$(\infty)$ with the same initial data $(q_0, \mu_0)$.
Let $\xi_i, n_i, X_i, M_i, \lambda_i, \Lambda$ and $Y$ be as in \Fref{eq:VRKMKgenq}, and $\xi_i^{(\infty)}$, etc.\ be the corresponding quantities in VRKMK$(\infty)$.

We define $\delta \xi_i = \xi_i-\xi_i^{(\infty)}$ and so on, and consider the difference between VRKMK$(\infty)$ and VRKMK$(r)$. 
Since $q_1 = \exp(Y)q_0$ and $\mu_1 = \bigl(\dexp_Y^{-1}\bigr)^{*} \Lambda$, the leading order of the difference between the two integrators is given by the leading orders of $\delta \! Y$ and $\delta \! \Lambda$.
It is clear from the equations in \Fref{eq:VRKMKgenq} defining the integrator that as $h\rightarrow 0$, $\lambda_i, \lambda_i^{(\infty)}, X_i, X_i^{(\infty)}, Y$ and $Y^{(\infty)}$ all go to zero as $\OO(h)$.
Furthermore, $\delta \xi_i , \delta \! n_i , \delta \! M_i$ and $\delta \! \Lambda$ must also go to zero as $h\rightarrow 0$.
By using the expressions in \Fref{eq:VRKMKgenq}, the equations~$\xi_i = \xi_0 + \OO(h)$, $n_i = n_0 + \OO(h)$, $\Lambda = \mu_0 + \OO(h)$, and $X_i = h c_i \xi_0 + \OO(h^2)$, and the series expansions of $\dexp^{*}$ and $\Ad^{*}_{\exp(\cdot)}$, we find that
\begin{align*}
	\delta \! \Lambda                &= -\tfrac{1}{2} \ad^{*}_{\delta \! Y} \mu_0 + h \sum_i b_i \bigl(\ad^{*}_{\delta \! X_i} n_0 + \delta \! n_i\bigr) + \text{higher order terms}, \\
	\delta \! M_i                    &= -\tfrac{1}{2} \ad^{*}_{\delta \! X_i} \mu_0 - \frac{B_{r + 1}}{(r + 1)!} h^{r + 1} c_i^{r + 1} \bigl(\ad^{*}_{\xi_0}\bigr)^{r + 1} \mu_0 + \delta \! \Lambda + \sum_j \frac{a_{j i}}{b_i} \delta \! \lambda_j + \OO(h^{r + 2}) + \hot, \\
	\delta \! \lambda_i              &= -h b_i \bigl(\tfrac{1}{2} \ad^{*}_{\delta \! X_i} n_0 + \delta \! n_i\bigr) + h b_i \frac{B_{r + 1}}{(r + 1)!} h^r c_i^r \bigl(\ad^{*}_{\xi_0}\bigr)^{r + 1} \mu_0 \\
	                                 &\quad + h b_i \Bigl(\bigl(\tfrac{1}{2} \ad^{*}_{\delta \xi_i} - \tfrac{1}{6} \ad^{*}_{\xi_0} \ad^{*}_{\delta \! X_i} + \tfrac{1}{12} \ad^{*}_{\delta \! X_i} \ad^{*}_{\xi_0}\bigr) \mu_0 + \tfrac{1}{2} \ad^{*}_{\xi_0} \delta \! \Lambda\Bigr) + \OO(h^{r + 2}) + \hot, \\
	\delta \! X_i                    &= h \sum_j a_{i j} \delta \xi_j + \OO(h^{r + 3}) + \hot, \\
	(\delta \xi_i, \delta \! n_i) &= T_{(q_0,\mu_0)} f(\delta \! X_i \cdot q_0, \delta \! M_i) + \hot, \\
	\delta \! Y                      &= h \sum_i b_i \bigl(\delta \xi_i - \tfrac{1}{2} \ad_{\delta \! X_i} \xi_0 \bigr) + \OO(h^{r + 3}) + \hot
\end{align*}
In the equations above, ``higher order terms (h.o.t.)'' denote terms that are dominated by at least one of the preceding terms.

We continue by combining the equations and dropping terms of higher order.
Consider the equation for $\delta \xi_i$, and insert the expression for $\delta \! X_i$.
We obtain
\begin{align*}
	\delta \xi_i &= \pdiff{\! f_1}{q} (\delta \! X_i \cdot q_0) + \pdiff{\! f_1}{\mu} (\delta \! M_i) + \hot \\
	                &= \pdiff{\! f_1}{q} \Bigl(h \sum_j a_{ij} \delta \xi_j \cdot q_0\Bigr) + \pdiff{\! f_1}{\mu} (\delta \! M_i) + \OO(h^{r + 3}) + \hot \\
	                &=  \pdiff{\! f_1}{\mu} (\delta \! M_i) + \OO(h^{r + 3}) + \hot,
\shortintertext{and}
	\delta \! X_i   &= h \sum_j a_{ij} \pdiff{\! f_1}{\mu} (\delta \! M_i) + \OO(h^{r + 3}) + \hot
\end{align*}
Similarly, successively we get
\begin{align*}
	\delta \! n_i       &= \pdiff{\! f_2}{q} (\delta \! X_i \cdot q_0) + \pdiff{\! f_2}{\mu} (\delta \! M_i) + \hot \\
	                    &= \pdiff{\! f_2}{\mu} (\delta \! M_i) + \OO(h^{r + 3}) + \hot, \\
	\delta \! Y         &= h \sum_i b_i \delta \xi_i + \OO(h^{r + 3}) + \hot \\
	                    &= h \pdiff{\! f_1}{\mu} \Bigl(\sum_i b_i \delta \! M_i\Bigr) + \OO(h^{r + 3}) + \hot,\\
	\delta \! \Lambda   &= -\tfrac{1}{2} \ad^{*}_{\delta \! Y} \mu_0 + h \sum_i b_i \delta \! n_i + \OO(h^{r + 3}) + \hot, \\
	\delta \! \lambda_i &= h b_i \biggl(\frac{B_{r + 1}}{(r + 1)!} h^r c_i^r \bigl(\ad^{*}_{\xi_0}\bigr)^{r + 1} \mu_0 + \tfrac{1}{2} \ad^{*}_{\delta \xi_i} \mu_0 - \delta \! n_i\biggr) + \OO(h^{r + 2}) + \hot, \\
	\delta \! M_i       &= \frac{B_{r + 1}}{(r + 1)!} h^{r + 1} \biggl(-c_i^{r + 1} + \sum_j \frac{a_{j i}}{b_i} b_j c_j^r\biggr) \bigl(\ad^{*}_{\xi_0}\bigr)^{r + 1} \mu_0  + \OO(h^{r + 2}).
\end{align*}
From the last equation, we see that
\begin{align*}
	\sum_i b_i \delta \! M_i &= \OO(h^{r + 2}),
\shortintertext{which yields, successively,}
	\delta \! Y              &= \OO(h^{r + 3}), \\
	\sum_i b_i \delta \! n_i &= \OO(h^{r + 2}), \\
	\delta \! \Lambda        &= \OO(h^{r + 3}),
\end{align*}
concluding the proof.
\end{proof}

An immediate consequence of the proof is that there exist methods in this class of arbitrarily high order.
For instance, the Gauss methods \cite[Section~II.1.3]{hairer06} form a class of Runge--Kutta methods which achieve arbitrarily high order.
Since these methods themselves are symplectic, $\hat{a}_{ij}= b_j - b_j a_{ji} / b_i = a_{ij}$, and the variational method based on a Gauss method is a partitioned Runge--Kutta method with the same coefficients for both position and momentum.
The variational method is equivalent to the Gauss method itself applied to the Hamiltonian ODE, and has therefore the same order as the Gauss method itself.
When $r$ is large enough, the VRKMK method based on the coefficients of a Gauss method achieves the same order as the Gauss method.

\subsection{Order of VCG integrators}

In this section, we will prove that there exist VCG integrators of any order.
To show this, we will need the following lemma.
\begin{lemma}[Composition of VCG integrators] 
	Let $(A^{(1)}, b^{(1)})$ and $(A^{(2)}, b^{(2)})$ be the Butcher tableaux of Runge--Kutta methods with $s^{(1)}$ and $s^{(2)}$ stages, and $\gamma$ a real number.
	The composition method formed by first applying the VCG method based on $(A^{(1)}, b^{(1)})$ with step length~$\gamma h$ and then the VCG method based on $(A^{(2)}, b^{(2)})$ with step length~$(1 - \gamma) h$, is a VCG method with Butcher tableau
	\bigskip
	\renewcommand{\arraystretch}{1.4}
	\[
	\begin{array}{c|cccccc}
		&                  &                &                          & \multicolumn{1}{|c}{} &                    &                             \\
		&                  & \gamma A^{(1)} &                          & \multicolumn{1}{|c}{} & 0                  &                             \\
		&                  &                &                          & \multicolumn{1}{|c}{} &                    &                             \\ \cline{2-7}
		& \gamma b_1^{(1)} & \cdots         & \gamma b_{s^{(1)}}^{(1)} & \multicolumn{1}{|c}{} &                    &                             \\
		& \vdots           &                & \vdots                   & \multicolumn{1}{|c}{} & (1-\gamma) A^{(2)} &                             \\
		& \gamma b_1^{(1)} & \cdots         & \gamma b_{s^{(1)}}^{(1)} & \multicolumn{1}{|c}{} &                    &                             \\
		\hline
		& \gamma b_1^{(1)} & \cdots         & \gamma b_{s^{(1)}}^{(1)} & (1-\gamma) b_1^{(2)}  & \cdots             & (1-\gamma)b_{s^{(2)}}^{(2)} \\
	\end{array}
	\]
	\bigskip
\end{lemma}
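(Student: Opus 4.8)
The plan is to verify the claim directly at the level of the defining equations \fref{eq:vcg-integrator}, showing that the combined Butcher tableau reproduces the concatenation of the two VCG steps. I index the $s^{(1)}+s^{(2)}$ stages of the composed method so that $i\in\{1,\dotsc,s^{(1)}\}$ labels the stages inherited from the first method and $i=s^{(1)}+\hat{\imath}$, $\hat{\imath}\in\{1,\dotsc,s^{(2)}\}$, those from the second. I write $\bar a_{ij},\bar b_i$ for the combined coefficients read off the tableau, and let $(q_0,\mu_0)\mapsto(q_1,\mu_1)$ denote one step of the first VCG method with step length $\gamma h$ and $(q_1,\mu_1)\mapsto(q_2,\mu_2)$ one step of the second with step length $(1-\gamma)h$. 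The goal is to exhibit an identification of the composed stage quantities $\xi_i,n_i,Q_i,M_i$ with those of the two component steps under which every line of \fref{eq:vcg-integrator} for the combined tableau reduces to the corresponding line of a component step.

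First I would treat the configuration variables. Because the top-right block of the combined $A$ is zero and its bottom-left block repeats the row $\gamma b^{(1)}$, the internal recursion $Q_{ij}=\exp(h\bar a_{ij}\xi_j)Q_{i,j-1}$ for $i\le s^{(1)}$ involves only $\xi_1,\dotsc,\xi_{s^{(1)}}$ with coefficients $\gamma a^{(1)}_{ij}$, so $Q_i$ coincides with the $i$-th stage of the first method at step $\gamma h$. The propagation $q^j=\exp(h\bar b_j\xi_j)q^{j-1}$ then gives $q^{s^{(1)}}=q_1$, the output configuration of the first step; for $i>s^{(1)}$ the zero block forces the stage word to begin from $q_1$ and to use only the $\xi_{s^{(1)}+\hat{\jmath}}$, so $Q_{s^{(1)}+\hat{\imath}}$ is exactly the $\hat{\imath}$-th stage of the second method based at $q_1$, and $q^{s^{(1)}+s^{(2)}}=q_2$. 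The momentum balance $\bar\mu_1=\bar\mu_0+h\sum_j\bar b_j\bar n_j$ splits as a sum over the two blocks which, by the two component balances, telescopes through the intermediate covector $\Ad^{*}_{q_1}\mu_1$ to yield $\Ad^{*}_{q_2}\mu_2$.

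The step I expect to carry the real content is the reconstruction of the stage momenta $M_i$ for $i\le s^{(1)}$, since the composed formula feeds these the \emph{final} covector $\bar\mu_1=\Ad^{*}_{q_2}\mu_2$, whereas the first method's $M$-equation should see the intermediate covector $\Ad^{*}_{q_1}\mu_1$. Here the bottom-left $\gamma b^{(1)}$ block is essential: for $j=s^{(1)}+\hat{\jmath}$ one has $\bar a_{ji}=\gamma b^{(1)}_i$ and $Q_{ji}=q^i$, so the corresponding summands in the last line of \fref{eq:vcg-integrator} collect into $-\dexp^{*}_{h\gamma b^{(1)}_i\xi_i}\circ\Ad^{*}_{(q^i)^{-1}}\bigl(h(1-\gamma)\sum_{\hat{\jmath}}b^{(2)}_{\hat{\jmath}}\bar n_{s^{(1)}+\hat{\jmath}}\bigr)$. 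By the second method's momentum balance this bracket equals $\bar\mu_1-\Ad^{*}_{q_1}\mu_1$, and it cancels against the leading term of $M_i$ to leave precisely $\dexp^{*}_{h\gamma b^{(1)}_i\xi_i}\circ\Ad^{*}_{(q^i)^{-1}}\Ad^{*}_{q_1}\mu_1$; the remaining $j\le s^{(1)}$ terms already carry the common factor $\gamma$ and reproduce the subtracted sum of the first method. Thus the first-block $M$-equations coincide with the first VCG step.

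Finally I would dispatch the second block, which is easier: for $i>s^{(1)}$ the vanishing top-right block annihilates all $j\le s^{(1)}$ contributions, so the leading term directly uses $\bar\mu_1=\Ad^{*}_{q_2}\mu_2$ and the surviving $j>s^{(1)}$ terms reproduce, after extracting the common factor $(1-\gamma)$, the subtracted sum of the second method based at $q_1$. Together with the relations $(\xi_i,n_i)=f(Q_i,M_i)$, which agree blockwise because the $Q_i$ and $M_i$ do, every equation of \fref{eq:vcg-integrator} for the combined tableau becomes equivalent to the system for the first step followed by that for the second. Hence the composition is the VCG method with the stated Butcher tableau. The only delicate point is the momentum cancellation of the previous paragraph; everything else is bookkeeping dictated by the block structure of the tableau.
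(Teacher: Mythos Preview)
Your argument is correct, but it proceeds along a different line from the paper's own proof. The paper works entirely at the level of discrete Lagrangians: it invokes the result of Marsden and West \cite[Theorem~2.5.1]{marsden01} that the composition of two variational integrators is the variational integrator generated by the composition discrete Lagrangian $L_h^{(\mathrm{c})}(q_0,q_1)=L^{(1)}_{\gamma h}(q_0,\bar q)+L^{(2)}_{(1-\gamma)h}(\bar q,q_1)$ with $\bar q$ extremal, and then simply observes that writing out $L_h^{(\mathrm{c})}$ together with its constraints reproduces the VCG discrete Lagrangian \fref{eq:cg-discrete-lagrangian} for the combined tableau. No manipulation of the momentum equations is needed.

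Your route instead bypasses the variational machinery and checks the claim directly at the level of the integrator equations \fref{eq:vcg-integrator}. This is more elementary in that it does not rely on the Marsden--West composition theorem, and it makes transparent \emph{why} the bottom-left $\gamma b^{(1)}$ block is exactly what is required: as you correctly identify, it forces $Q_{ji}=q^i$ for second-block $j$ and first-block $i$, which is precisely what makes the second-block contributions to the first-block $M_i$ collapse, via the second method's momentum balance, into the replacement $\Ad^{*}_{q_2}\mu_2\rightsquigarrow\Ad^{*}_{q_1}\mu_1$. The price is that this cancellation has to be worked out by hand, whereas in the paper's approach it is hidden inside the extremisation over $\bar q$. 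Both arguments are valid; the paper's is shorter and more conceptual, yours is self-contained and exposes the algebraic mechanism.
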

\begin{proof}
	Consider the discrete Lagrangians corresponding to the two VCG integrators that are to be composed,
	\begin{align*}
		L_h^{(1)}(q_0, q_1) &= h \sum_{i = 1}^{s^{(1)}} b_i^{(1)} \ell\bigl(Q_i^{(1)}, \xi_i^{(1)}\bigr), \\
	\shortintertext{with constraints}
		Q_i^{(1)} &= \exp\bigl(h a_{i{s^{(1)}}}^{(1)} \xi^{(1)}_{s^{(1)}}\bigr) \dotsm \exp\bigl(h a^{(1)}_{i1} \xi^{(1)}_1\bigr) q_0, \\
		q_1 &= \exp\bigl(h b^{(1)}_{s^{(1)}} \xi_{s^{(1)}}^{(1)}\bigr) \dotsm \exp\bigl(h b^{(1)}_1 \xi^{(1)}_1\bigr) q_0,
	\end{align*}
	and
	\begin{align*}
		L_h^{(2)}(q_0, q_1) &= h \sum_{i = 1}^{s^{(2)}} b_i^{(2)} \ell\bigl(Q_i^{(2)}, \xi_i^{(2)}\bigr), \\
	\shortintertext{with constraints}
		Q_i^{(2)} &= \exp\bigl(h a^{(2)}_{is^{(2)}} \xi^{(2)}_{s^{(2)}}\bigr) \dotsm \exp\bigl(h a^{(2)}_{i1} \xi^{(2)}_1\bigr) q_0, \\
		q_1 &= \exp\bigl(h b^{(2)}_{s^{(2)}} \xi^{(2)}_{s^{(2)}}\bigr) \dotsm \exp\bigl(h b^{(2)}_1 \xi^{(2)}_1\bigr) q_0,
	\end{align*}
	as well as the \emph{composition discrete Lagrangian}
	\[
		L_h^{(\mathrm{c})}(q_0, q_1) = L_{\gamma h}^{(1)} (q_0,\bar q) + L^{(2)}_{(1-\gamma) h} (\bar q,q_1),
	\]
	where $\bar q$ is chosen so that $L_h^{(\mathrm{c})}$ is extremized.
	It was proved in \cite[Theorem~2.5.1]{marsden01} that the integrator corresponding to $L_h^{(\mathrm{c})}$ is the composition method that results from composing the integrator corresponding to $L^{(1)}_{\gamma h}$ with the integrator corresponding to $L_{(1-\gamma)h}^{(2)}$.
	Denote by $(A^{(\mathrm{c})}, b^{(\mathrm{c})})$ the Butcher tableau with $s^{(\mathrm{c})} = s^{(1)} + s^{(2)}$ stages given above.
	Then
	\begin{align*}
		L_h^{(\mathrm{c})}(q_0, q_1) &= h \Biggl(\gamma \sum_{i = 1}^{s^{(1)}} b^{(1)}_i \ell\bigl(Q^{(1)}_i, \xi^{(1)}_i\bigr) + (1-\gamma) \sum_{i = 1}^{s^{(2)}} b^{(2)}_i \ell\bigl(Q^{(2)}_i, \xi^{(2)}_i\bigr) \Biggr) \\
		&= h \sum_{i = 1}^{s^{(\mathrm{c})}} b^{(\mathrm{c})}_i \ell\bigl(Q^{(\mathrm{c})}_i, \xi^{(\mathrm{c})}_i\bigr), \\
	\shortintertext{with constraints}
		Q_i^{(\mathrm{c})} &= \exp\bigl(h a^{(\mathrm{c})}_{i s^{(\mathrm{c})}} \xi^{(\mathrm{c})}_{s^{(\mathrm{c})}}\bigr) \dotsm \exp\bigl(h a^{(\mathrm{c})}_{i1} \xi^{(\mathrm{c})}_1\bigr) q_0, \\
		q_1 &= \exp\bigl(h b^{(\mathrm{c})}_{s^{(\mathrm{c})}} \xi^{(\mathrm{c})}_{s^{(\mathrm{c})}}\bigr) \dotsm \exp\bigl(h b^{(\mathrm{c})}_1 \xi^{(\mathrm{c})}_1\bigr) q_0.
	\end{align*}
\end{proof}

\begin{proposition}
	There exist methods of any order among the VCG integrators.
\end{proposition}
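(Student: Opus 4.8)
The plan is to obtain arbitrarily high order by composition, using the preceding lemma to guarantee that we never leave the VCG class. The strategy rests on two ingredients: a low-order symmetric VCG method to serve as a building block, and the standard order-increasing composition of symmetric methods. The lemma supplies exactly the closure property that makes this work, namely that the composition of VCG methods is again a VCG method.

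First I would exhibit a symmetric VCG method of order two. The natural candidate is the one-stage method with $b_1 = 1$ and $a_{11} = 1/2$: for $s = 1$ the products of exponentials in \fref{eq:vcg-integrator} each reduce to a single exponential, so this method coincides with the midpoint method of the Example above, which is symmetric and of order two. Denote by $\Phi_h$ the associated integrator, so that $\Phi_h$ is symmetric and of order two.

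Next I would raise the order by the triple-jump construction. Given any symmetric integrator of order $2k$, its palindromic composition with step sizes $\gamma h$, $(1 - 2\gamma) h$, $\gamma h$, where $\gamma = 1 / (2 - 2^{1/(2k+1)})$, is again symmetric and of order $2k + 2$ (standard composition theory, see \cite{hairer06}). By the preceding lemma the binary composition of two VCG methods with any real splitting parameter is again a VCG method; iterating the lemma, any finite composition of VCG methods with step weights summing to the full step is again a VCG method. In particular the triple-jump composition of $\Phi_h$ with itself is a VCG method. Starting from $\Phi_h$ and applying this construction repeatedly, I obtain by induction a symmetric VCG method of every even order $2k$ with $k \ge 1$, and hence VCG methods of arbitrarily high order.

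The delicate point — the thing to verify rather than merely cite — is that symmetry is preserved at each composition step, since this is what lets the triple-jump gain two orders per iteration. I would establish this at the level of flow maps rather than Butcher tableaux: a symmetric integrator satisfies $\Phi_h^{-1} = \Phi_{-h}$, and since taking adjoints reverses the order of a composition while fixing each self-adjoint factor, a composition of symmetric maps along a palindromic sequence of step sizes is again symmetric; the lemma then only needs to contribute the fact that the composite lies in the VCG class, which it does. A secondary point worth flagging is that the triple-jump uses a \emph{negative} middle step, $(1 - 2\gamma) < 0$, so it is essential that the lemma permits arbitrary real $\gamma$ — which it explicitly does.
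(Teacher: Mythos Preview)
Your proposal is correct and follows essentially the same approach as the paper: both use the preceding closure lemma together with standard composition theory to conclude that arbitrarily high order can be achieved within the VCG class. The paper's proof is terser---it simply cites \cite[Section~II.4]{hairer06} for the general composition result---whereas you spell out the concrete base method (the midpoint rule, which the paper also identifies as symmetric in the earlier Example), the specific triple-jump construction, and the verification that symmetry and the VCG property are preserved under palindromic composition; this added detail is sound and in fact matches the explicit VCG tableaux the paper later uses in its numerical tests.
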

\begin{proof}
	From \cite[Section~II.4]{hairer06}, we know that if we compose a one-step method with itself using different step sizes, we can obtain arbitrarily high order, provided we choose the number of steps and the step sizes appropriately.
	Thus, we obtain VCG methods of any order by composition.
\end{proof}

\begin{proposition} \label{prn:vcg-order}
	For VCG integrators applied to regular Lagrangian problems, the order conditions for first and second order are the same as for the underlying Runge--Kutta method, i.e.\
	\begin{equation*}
		\sum_{i = 1}^s b_i = 1, \qquad \text{and} \qquad \sum_{i = 1}^s b_i c_i = \frac{1}{2},
	\end{equation*}
	where $c_i = \sum_j a_{ij}$.
\end{proposition}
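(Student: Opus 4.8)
The plan is to invoke the variational error analysis of Marsden and West recited at the beginning of this section: the symplectic map generated by the VCG discrete Lagrangian \fref{eq:cg-discrete-lagrangian} is of order $p$ if and only if $L_h$ is equivalent to a discrete Lagrangian of order $p$, i.e.\ one for which $L_h\bigl(q_0, q(h)\bigr) = \int_0^h \ell\bigl(q(t),\xi(t)\bigr)\,\dd t + \mathcal{O}(h^{p+1})$ along exact solutions. Since only first and second order are at stake, it suffices to expand $L_h\bigl(q_0, q(h)\bigr) = h\sum_i b_i \ell(Q_i, \xi_i)$ along an exact solution $q(t)$ and compare it with the action integral up to and including the term of order $h^2$; equivalently, I must control $\sum_i b_i \ell(Q_i, \xi_i)$ only to order $h^1$.

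First I would expand the Crouch--Grossman constraint maps by the Baker--Campbell--Hausdorff formula. With each factor $h a_{ij}\xi_j = \mathcal{O}(h)$ one obtains
\[
	X_i = \log\bigl(\exp(h a_{is}\xi_s)\dotsm\exp(h a_{i1}\xi_1)\bigr) = h\sum_j a_{ij}\xi_j + \mathcal{O}(h^2), \qquad Y = h\sum_j b_j\xi_j + \mathcal{O}(h^2),
\]
where every correction is a commutator and hence of order $h^2$. The decisive observation is a scaling argument: because $Q_i = \exp(X_i)q_0$ enters $L_h$ only through $\ell(Q_i,\xi_i)$, and $\ell(\exp(X_i)q_0,\xi_i)$ depends on the commutator part of $X_i$ only at order $h^2$, these corrections feed into $L_h = h\sum_i b_i\ell(Q_i,\xi_i)$ only at order $h^3$. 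Thus, up to $\mathcal{O}(h^3)$, the VCG discrete Lagrangian coincides with the one obtained by replacing every composition of exponentials by the single exponential $\exp\bigl(h\sum_j a_{ij}\xi_j\bigr)$, i.e.\ with the variational Runge--Kutta (abelian case) discrete Lagrangian built on the same coefficients. The problem thereby reduces to the order conditions of the variational Runge--Kutta methods recalled earlier, and matching the expansions to order $h^1$ (respectively $h^2$) in $\sum_i b_i\ell$ gives $\sum_i b_i = 1$ for order one and, additionally, $\sum_i b_i c_i = \tfrac12$ for order two, exactly as for the underlying RK method. For the latter I would also check directly that, using $\sum_i b_i = 1$, the momentum condition $\sum_i \hat b_i \hat c_i = \tfrac12$ with $\hat b_i = b_i$ and $\hat a_{ij} = b_j - b_j a_{ji}/b_i$ is equivalent to $\sum_i b_i c_i = \tfrac12$, so that the partitioned scheme is consistent in both components.

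The step I expect to be the main obstacle is the bookkeeping underlying the scaling argument: one must confirm that the implicitly defined stage values $\xi_i$ — fixed by extremizing \fref{eq:vcg-integrator}, and therefore differing from their abelian-case counterparts — differ only at an order in $h$ that still enters $L_h$ beyond $h^2$. I would establish $\xi_i = \xi(0) + \mathcal{O}(h)$ in both methods and verify that the first $h$-correction, which is where the abscissae $c_i$ appear, is insensitive to the $\mathcal{O}(h^2)$ commutator terms in the constraints, so that no commutator contribution is promoted to order $h^2$ in $L_h$ through the implicit dependence of the stages on the constraints. Necessity of the two conditions can alternatively be read off without any expansion: by \Fref{prn:orderlimit} the order of the VCG method is bounded above by that of the underlying Crouch--Grossman method, whose first- and second-order conditions are again $\sum_i b_i = 1$ and $\sum_i b_i c_i = \tfrac12$.
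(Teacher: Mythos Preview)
Your route differs from the paper's. The paper expands both $L_h^{\mathrm E}\bigl(q_0,q(h)\bigr)$ and the VCG discrete Lagrangian directly as power series in $h$ and matches coefficients; to obtain the $h^2$ term of $L_h$ it differentiates the explicit integrator equations \fref{eq:vcg-integrator} and \fref{eq:M_j-with-mu_0} at $h=0$, producing $\diff{Q_i}{h}\big|_{h=0}$, $\sum_i b_i\diff{\xi_i}{h}\big|_{h=0}$ and $\sum_i b_i\diff{M_i}{h}\big|_{h=0}$, from which the two conditions drop out. You instead try to reduce the VCG discrete Lagrangian to the single-exponential format of \Fref{sec:secondorder} via BCH and then quote the conditions for that class. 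Your necessity argument via \Fref{prn:orderlimit} is fine.

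The gap in your sufficiency argument is slightly different from the obstacle you flag. At fixed $\xi$ the objective $h\sum_i b_i\ell(Q_i,\xi_i)$ indeed shifts by $O(h^3)$ under the replacement of compositions by single exponentials, since $X_i$ moves by $O(h^2)$ and carries a prefactor $h$. But the \emph{constraint} $Y(\xi)=\log\bigl(q(h)q_0^{-1}\bigr)$ also moves by $O(h^2)$, and its Lagrange multiplier is $\Lambda=\dexp_Y^{*}\mu_1=O(1)$; the standard sensitivity estimate for a constrained extremum then gives only
\[
L_h^{\mathrm{VCG}}-L_h^{\mathrm{single}}
\;=\;O(h^3)\;-\;\bigl\langle\Lambda,\,Y^{\mathrm{VCG}}-Y^{\mathrm{single}}\bigr\rangle\;+\;\cdots\;=\;O(h^2),
\]
one order short of what the second-order comparison needs. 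The missing observation is that at the critical point all stage velocities coincide at $h=0$ (you do note $\xi_i=\xi(0)+O(h)$), so every BCH commutator $[\xi_j,\xi_k]$ picks up an \emph{extra} factor of $h$ when evaluated there; this upgrades $Y^{\mathrm{VCG}}-Y^{\mathrm{single}}$ and $X_i^{\mathrm{VCG}}-X_i^{\mathrm{single}}$ from $O(h^2)$ to $O(h^3)$ at the extremum and closes the gap. With that in hand your reduction works and you may invoke the order conditions for the format \fref{eq:JCP-int} stated at the end of \Fref{sec:secondorder} (a special case of \Fref{thm:orderRKMK} with $r=0$). One terminological point: the single-exponential format on $G$ is not literally the abelian variational RK method; identifying their order conditions already requires the VRKMK argument, so your phrase ``the variational Runge--Kutta (abelian case) discrete Lagrangian'' overshoots what the BCH step alone delivers.
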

\begin{proof}
	We use variational order analysis, as presented in \cite[Section~2.3]{marsden01}.
	Let the exact discrete Lagrangian be denoted
	\[
		L_h^\mathrm{E}\bigl(q_0, q(h)\bigr) = \int_0^h L\bigl(q(t), \dot q(t)\bigr) \, \dd t, \qquad \text{where} \quad q_0 = q(0).
	\]
	The exact discrete Lagrangian can be expanded in powers of $h$:
	\[
		L_h^\mathrm{E}\bigl(q_0, q(h)\bigr) = \sum_{k = 0}^\infty \frac{h^k}{k!} \biggl(\diff{^k}{h^k} L_h^\mathrm{E}\bigl(q_0, q(h)\bigr)\biggr\rvert_{h = 0}\biggr).
	\]
	From the right-trivialised HP equations~\Fref{eq:right-triv-HP} and \Fref{eq:f-function}, it is straight-forward to show that
	\[
		\diff{}{t} L\bigl(q(t), \dot q(t)\bigr) = \diff{}{t} \langle\mu, \xi\rangle = \langle \dot\mu, \xi\rangle + \langle\mu, \dot\xi\rangle = \langle f_2(z), f_1(z)\rangle + \biggl\langle\mu, \diff{}{t} f_1(z)\biggr\rangle,
	\]
	where $z = (q, \mu)$ and $f(z) = \bigl(f_1(z), f_2(z)\bigr)$.
	Thus, letting $(\xi_0, n_0) = f(z_0) = f(q_0, \mu_0)$, and using $\ell(q, \xi) = L(q, \dot q)$, we get
	\[
		L_h^\mathrm{E}\bigl(q_0, q(h)\bigr) = h \ell(q_0, \xi_0) + \frac{h^2}{2} \biggl(\langle n_0, \xi_0\rangle + \biggl\langle\mu_0, \pdiff{f_1}{q}(\xi_0 \cdot q_0) + \pdiff{f_1}{\mu}\bigl(n_0 - \ad^{*}_{\xi_0} \mu_0\bigr)\biggr\rangle\biggr) + \mathcal O(h^3).
	\]
	
	Similarly, we can expand the discrete Lagrangian in powers of $h$ by using \Fref{eq:cg-discrete-lagrangian} together with $Q_i\rvert_{h=0} = q_0$ and $\xi_i \rvert_{h=0} = \xi_0$:
	\begin{align*}
		L_h\bigl(q_0, q(h)\bigr) &= \sum_{k = 0}^\infty \frac{h^k}{k!} \biggl(\diff{^k}{h^k} L_h\bigl(q_0, q(h)\bigr)\biggr\rvert_{h = 0}\biggr) \\
		&= \sum_{k = 0}^\infty \frac{h^k}{k!} \biggl(\diff{^k}{h^k} h \sum_{i = 1}^s b_i \ell(Q_i, \xi_i) \biggr\rvert_{h = 0}\biggr) \\
		&= h \Bigl(\sum_i b_i\Bigr) \ell(q_0, \xi_0) + \frac{h^2}{2} \biggl(2 \sum_i b_i \diff{}{h} \ell(Q_i, \xi_i)\biggr\rvert_{h=0}\biggr) + \mathcal O(h^3).
	\end{align*}
	By comparing equal powers of the two expansions, we see that the first order condition is $\sum_i b_i = 1$, as in RK methods.
	The second term needs more work.
	We apply \Fref{eq:f-function} together with $n_i\rvert_{h=0} = n_0$ and $M_i\rvert_{h=0} = \mu_0$ and get
	\begin{align*}
		2 \sum_i b_i \diff{}{h} \ell(Q_i, \xi_i) \biggr\rvert_{h=0} &= 2 \sum_i b_i \biggl(\biggl\langle n_i \cdot Q_i, \diff{Q_i}{h}\biggr\rangle + \biggl\langle M_i, \diff{\xi_i}{h}\biggr\rangle\biggr)\biggr\rvert_{h=0} \\
		&= 2 \sum_i b_i \biggl\langle n_0 \cdot q_0, \diff{Q_i}{h} \biggr\rvert_{h=0}\biggr\rangle + 2 \biggl\langle \mu_0, \sum_i b_i \diff{\xi_i}{h} \biggr\rvert_{h=0} \biggr\rangle.
	\end{align*}
	We calculate the derivatives of $Q_i$ and $\xi_i$ with respect to $h$ using \Fref{eq:vcg-integrator}:
	\begin{gather*}
		\diff{Q_i}{h}\biggr\rvert_{h=0} = \sum_j a_{ij} \xi_0 \cdot q_0 = c_i \xi_0 \cdot q_0, \\
		\sum_i b_i \diff{\xi_i}{h}\biggr\rvert_{h=0} = \sum_i b_i \biggl( \pdiff{f_1}{q} \circ \diff{Q_i}{h}\biggr\rvert_{h=0} + \pdiff{f_1}{\mu} \circ \diff{M_i}{h}\biggr\rvert_{h=0} \biggr)
	\end{gather*}
	We also need the derivative of $M_i$. In this expression, we apply \Fref{eq:M_j-with-mu_0} and simplify using the first order condition:
	\[
		\sum_i b_i \diff{M_i}{h}\biggr\rvert_{h=0} = \Bigl(1 - \sum_i b_i c_i\Bigr) n_0 - \frac{1}{2} \ad^{*}_{\xi_0} \mu_0.
	\]
	Putting these equations together, we obtain
	\begin{multline*}
		2 \sum_i b_i \diff{}{h} \ell(Q_i, \xi_i) \biggr\rvert_{h=0} = 2 \sum_i b_i c_i \langle n_0, \xi_0 \rangle \\
		+ \biggl\langle \mu_0, 2 \sum_i b_i c_i \pdiff{f_1}{q}(\xi_0 \cdot q_0) + \pdiff{f_1}{\mu}\biggl(2 \Bigl(1-\sum_i b_i c_i\Bigr) n_0 - \ad^{*}_{\xi_0} \mu_0\biggr) \biggr\rangle.
	\end{multline*}
	Thus, to get second order, we need the second order RK condition $\sum_i b_i c_i = 1/2$.
\end{proof}

The computation for third order is similar, but much more complicated.
We give the third order conditions here, without proof.
\begin{proposition}
	For VCG methods applied to regular Lagrangian problems, using $b_i \hat a_{ij} + b_j a_{ji} = b_i b_j$ and $\hat c_i = \sum_{j = 1}^s \hat a_{ij}$, the conditions for third order are
	\begin{align*}
		\sum_{i = 1}^s b_i c_i^2 &= \frac{1}{3}, \\
		\sum_{i = 1}^s \sum_{j = 1}^s b_i a_{ij}c_j  &= \frac{1}{6}, \\
		\sum_{i = 1}^s b_i c_i \biggl( \sum_{j = 1}^{i - 1} b_j + \frac{b_i}{2} \biggr) &= \frac{1}{3}, \\
		\sum_{i = 1}^s b_i \hat c_i^2 &= \frac{1}{3}, \\
		\sum_{i = 1}^s b_i \hat c_i \biggl( \sum_{j = 1}^{i - 1} b_j + \frac{b_i}{2} \biggr) &= \frac{1}{3}, \\
		\sum_{i = 1}^s b_i^3 &= 0.
	\end{align*}
\end{proposition}
The first two conditions come from standard RK methods, the third condition comes from CG methods and the fourth comes from SPRK methods, while the final two conditions are new.
The last condition also appear in the order conditions for compositions of one-stage RK methods.
It is noteworthy that the final condition forces at least one of the weights $b_i$ to be negative.

The general order theory for VCG integrators is not complete and needs further study.

\section{Numerical tests}
\label{sec:num}

\begin{figure}
	\centering
	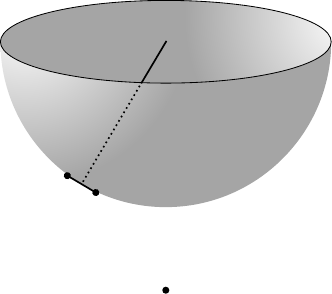
	\caption{Dipole on a stick}
	\label{fig:dipole-on-a-stick}
\end{figure}

To test our methods, we constructed a Hamiltonian test problem which we call ``dipole on a stick'' (see \Fref{fig:dipole-on-a-stick}).
The problem models a pendulum consisting of a long straight rod of length~$1$, with one end fixed (but freely rotating) at the origin, and a shorter rod of length $2\alpha$ with its centre attached perpendicularly to the long rod at the other end.
At each of the endpoints of the shorter rod there are charged particles with masses $m/2$ and electric charges~$\pm q$.
The rods are assumed to be massless.
The pendulum is affected by gravity in the negative~$\mathrm{e}_3$-direction and the electric field generated by a charged particle at position $z=(0,0,-3/2)\trans$ of charge $\beta$.
The physical constants for specific gravity and electric force are set equal to 1.
We chose this test problem so that it would have chaotic behaviour and conserved energy, with $\LieSO(3)$ as configuration space.

If we let $y_{+}(t), y_{-}(t)$ denote the positions of the positive and negative charge, respectively, the position of the pendulum can be described uniquely by the matrix $g(t) \in \LieSO(3)$ such that $y_{\pm}(t) = g(t)y_{\pm}^0$, where $y_{\pm}^{0} = (0, \pm \alpha, -1)\trans$ is the position of the two charged particles in reference or body coordinates.
Using the standard identification of $\Lieso(3)$ with $\RR^3$ and of $\Lieso(3)^{*}$ with $\RR^3$ via the standard inner product, the state of the system $(g, \mu)$, can be represented with $g\in \LieSO(3)$ as a $3\times 3$ real matrix, and $\mu \in \Lieso(3)^{*}$ as a vector in $\RR^3$.

The right-trivialized Hamiltonian of this system is
\[
\mathcal{H}(g, \mu) = \frac 12 \mu\trans g \II^{-1} g\trans \mu + m \mathrm{e}_3\trans g \mathrm{e}_3 + q \beta \bigl( \bigl\lVert g y_+^0-z\bigr\rVert^{-1} - \bigl\lVert g y_-^0-z\bigr\rVert^{-1} \bigr),
\]
where $\II = m \diag(1+\alpha^2, 1, \alpha^2)$  is the inertia tensor of the pendulum.

\subsection{Order tests}
\label{sec:order-tests}

\begin{table}
\renewcommand{\arraystretch}{1.4}
 \subfloat[Second order Gauss method]{
 \parbox[b]{0.45\textwidth}{
\begin{gather*}
\begin{array}{c|c}
 \frac 12 & \frac 12 \\
 \hline
  & 1
\end{array}
\\
r=0
\end{gather*}
\vskip-\bigskipamount
}
}\quad
 \subfloat[Kutta's third order method]{
 \parbox[b]{0.45\textwidth}{
\begin{gather*}
\begin{array}{c|ccc}
 0  &  0 & 0 & 0 \\
 \frac 12 & \frac 12 & 0 & 0 \\
 1 & -1 & 2 & 0 \\
 \hline 
  & \frac 16 & \frac 23 & \frac 16
\end{array}
\\
r=1
\end{gather*}
\vskip-\bigskipamount
}
}
\\
\subfloat[Fourth order Gauss method]{
\parbox[b]{0.45\textwidth}{
\begin{gather*}
 \begin{array}{c|cc}
 \frac 12-\frac {\sqrt{3}}{6} & \frac 14 & \frac 14 - \frac{\sqrt{3}}{6} \\
 \frac 12+\frac {\sqrt{3}}{6} & \frac 14 + \frac{\sqrt{3}}{6} & \frac 14 \\
 \hline
  & \frac 12 & \frac 12
\end{array}
\\
r=2
\end{gather*}
\vskip-\bigskipamount
}
}\quad
 \subfloat[Sixth order Gauss method]{
 \parbox[b]{0.45\textwidth}{
\begin{gather*}
 \begin{array}{c|ccc}
 \frac 12-\frac {\sqrt{15}}{10} & \frac 5{36}                        & \frac 29 - \frac{\sqrt{15}}{15} & \frac 5{36} - \frac{\sqrt{15}}{30} \\
 \frac 12                        & \frac 5{36} + \frac{\sqrt{15}}{24} & \frac 29                        & \frac 5{36} - \frac{\sqrt{15}}{24} \\
 \frac 12+\frac {\sqrt{15}}{10} & \frac 5{36} + \frac{\sqrt{15}}{30} & \frac 29 + \frac{\sqrt{15}}{15} & \frac 5{36}         \\
 \hline
  & \frac 5{18} & \frac 4{9} & \frac 5{18}
\end{array}
\\
r=4
\end{gather*}
\vskip-\bigskipamount
}
}
\caption{Butcher-tableaux of the RKMK methods tested}
\label{tab:RKMKbutcher}
\end{table}

\begin{table}
\renewcommand{\arraystretch}{1.4}
 \subfloat[Second order midpoint method]{
 \parbox[b]{0.45\textwidth}{
\begin{gather*}
\begin{array}{c|c}
 \frac 12 & \frac 12 \\
 \hline
  & 1
\end{array}
\\
\phantom{\gamma_1 = \frac{1}{2-2^{1/3}}, \quad \gamma_2 = \frac{-2^{1/3}}{2-2^{1/3}}}
\end{gather*}
\vskip-\bigskipamount
}}\quad
 \subfloat[Fourth order DIRK method based on triple jump]{
 \parbox[b]{0.45\textwidth}{
\begin{gather*}
\begin{array}{c|ccc}
 \frac 12 \gamma_1 & \frac 12 \gamma_1  &  0 & 0 \\
 \frac 12 & \gamma_1 & \frac 12 \gamma_2 & 0 \\
 1 - \frac 12 \gamma_1 & \gamma_1 & \gamma_2 & \frac 12 \gamma_1 \\
 \hline 
  & \gamma_1 & \gamma_2 & \gamma_1
\end{array}
\\
\gamma_1 = \frac{1}{2-2^{1/3}}, \quad \gamma_2 = \frac{-2^{1/3}}{2-2^{1/3}}
\end{gather*}
\vskip-\bigskipamount
}}
\\
\subfloat[Sixth order DIRK method]{
\parbox[b]{.95\textwidth}{
\begin{gather*}
 \begin{array}{c|ccccccc}
 \frac 12 \gamma_1 & \frac 12 \gamma_1 & 0 & 0 & 0 & 0 & 0 & 0 \\
 \gamma_1 + \frac 12 \gamma_2 & \gamma_1 & \frac 12 \gamma_2 & 0 & 0 & 0 & 0 & 0 \\
 \gamma_1 + \gamma_2 + \frac 12 \gamma_3 & \gamma_1 & \gamma_2 & \frac 12 \gamma_3 & 0 & 0 & 0 & 0 \\
 \frac 12 & \gamma_1 & \gamma_2 & \gamma_3 & \frac 12 \gamma_4 & 0 & 0 & 0 \\
 1 - (\gamma_1 + \gamma_2 + \frac 12 \gamma_3) & \gamma_1 & \gamma_2 & \gamma_3 & \gamma_4 & \frac 12 \gamma_3 & 0 & 0 \\
 1 - (\gamma_1 + \frac 12 \gamma_2) & \gamma_1 & \gamma_2 & \gamma_3 & \gamma_4 & \gamma_3 & \frac 12 \gamma_2 & 0 \\
 1 - \frac 12 \gamma_1 & \gamma_1 & \gamma_2 & \gamma_3 & \gamma_4 & \gamma_3 & \gamma_2 & \frac 12 \gamma_1 \\
 \hline
 & \gamma_1 & \gamma_2 & \gamma_3 & \gamma_4 & \gamma_3 & \gamma_2 & \gamma_1
\end{array}
\\[\medskipamount]
\begin{aligned}
	\gamma_1 &= 0.78451361047755726381949763, &\quad \gamma_2 &= 0.23557321335935813368479318, \\
	\gamma_3 &= -1.17767998417887100694641568, &\quad \gamma_4 &= 1.31518632068391121888424973
\end{aligned}
\end{gather*}
\vskip-\bigskipamount
}
}
\caption{Butcher-tableaux of the VCG methods tested}
\label{tab:VCGbutcher}
\end{table}
The VRKMK methods that were tested are based on the 1-, 2- and 3-stage Gauss methods, and Kutta's third order method.
These methods are defined by the Butcher tableaux and cut-off parameters in \Fref{tab:RKMKbutcher}.
These methods can be shown to satisfy the extra order conditions for variational integrators to their respective orders.

The order of the VCG methods were also tested.
To obtain higher order, symmetric composition of the midpoint method (which is symmetric, see \Fref{exa:midpoint}) as described in \cite[Section~V.3.2]{hairer06} was used.
The Butcher tableaux of the resulting methods are the same as those of the fourth and sixth order diagonally implicit Runge--Kutta methods (DIRK) shown in \Fref{tab:VCGbutcher}.
The parameters~$\gamma_1,\dotsc, \gamma_4$ were derived by Yoshida \cite{yoshida90}.

The methods were implemented in \textsc{Matlab}, using a modified version of the DiffMan package~\cite{engoe01-1} for defining Lie algebra and Lie group classes and functions on these spaces.
The sets of non-linear equations~\Fref{eq:VRKMKgenq} and \Fref{eq:vcg-integrator} were solved by fixed-point iteration.
The iteration was terminated when the norm of the residual became less than $10^{-11}$.

In these tests we have used the data
\begin{align*}
 m&=q=\beta=1, \\
 \alpha &= 0.1, \\
 g(0) &=\begin{bmatrix} 
         1 & 0 & 0 \\ 
         0 & 0 &-1 \\
         0 & 1 & 0
        \end{bmatrix},\\
 \mu(0) &= g(0) \II g(0)\trans \mathrm{e}_2.
 \end{align*}
The initial data $\mu(0)$ is chosen so that the first component of $f\bigl(g(0), \mu(0)\bigr)$ is $\mathrm{e}_2$.

The errors in $\mu(0.5)$ and $g(0.5)$ with respect to a reference solution are shown in \Fref{fig:order}.
The errors plotted are $\norm{\mu-\mu_{\text{ref}}}_2 +\norm{g-g_{\text{ref}}}_2$, where the first $\norm{\cdot}_2$ is the Euclidean vector norm, and the second is the subordinate matrix norm.
The reference solution was calculated using the sixth order VRKMK method with step size $h=10^{-3}$.
The dashed lines are reference lines for the appropriate orders and are the same lines in the two plots.
As is evident from the plots, errors from fixed point iteration dominates the errors for the 6th order methods when $h$ is small, and the methods appear to obtain their theoretical order.

Analytically, the second order VRKMK and VCG methods are actually identical.
The implementations of the two methods are different, as the non-linear equations are set up in slightly different manners.
The result of this is that the numerical solutions differ slightly.
For this numerical test, the error constants of the VRKMK methods are smaller than those of the VCG methods.

\begin{figure}
\centering
  \subfloat[Absolute error of VRKMK methods]{
  \begin{tikzpicture}[scale=0.8]
  \begin{loglogaxis}[%
     legend pos = south east,
      xlabel ={Step size},
      ylabel ={Error}]
    \addplot table [x = H, y = secondorder]{VRKMKerrortable.dat};
    \addlegendentry{2nd order}
    \addplot table[x = H, y=ERROR]{VRKMk3rdordererrortable.dat};
    \addlegendentry{3rd order}
    \addplot[brown!60!black, every mark/.append style={fill=brown!80!black}, mark=triangle*] table [x = H, y = fourthorder]{VRKMKerrortable.dat};
    \addlegendentry{4th order}
    \addplot table [x = H, y = sixthorder]{VRKMKerrortable.dat};
    \addlegendentry{6th order}
    \addplot[mark=none, dashed, domain=7e-4:0.1 ,samples=5, color=blue] {2*x*x};
    \addplot[mark=none,dashed,domain=7e-4:0.1,samples=5, color=red] {x^3};
    \addplot[mark=none,dashed,domain=7e-4:0.1,samples=5, color=brown!60!black] {x^4};
    \addplot[mark=none,dashed,domain=7e-4:0.1,samples=5, color=black] {0.1*x^6}; 
  \end{loglogaxis}
  \end{tikzpicture}
}
\quad
\subfloat[Absolute error of VCG methods]{
\begin{tikzpicture}[scale=0.8]
\begin{loglogaxis}[%
     legend pos = south east,
      xlabel ={Step size},
      ylabel ={Error}]
    \addplot table [x = H, y = secondorder]{CGorder.dat};
    \addlegendentry{2nd order}
    \addplot[brown!60!black, every mark/.append style={fill=brown!80!black}, mark=triangle*] table [x = H, y = fourthorder]{CGorder.dat};
    \addlegendentry{4th order}
    \addplot[black, mark=star] table [x = H, y = sixthorder]{CGorder.dat};
    \addlegendentry{6th order}
    \addplot[mark=none, dashed, domain=7e-4:0.1 ,samples=5, color=blue] {2*x*x};
    \addplot[mark=none, dashed, domain=7e-4:0.1,samples=5, color=brown!60!black] {x^4};
    \addplot[mark=none, dashed, domain=7e-4:0.1,samples=5, color=black] {0.1*x^6};
  \end{loglogaxis}
\end{tikzpicture}
}
\caption{Order plot. Dashed lines are reference lines for the appropriate orders}
\label{fig:order}
\end{figure}
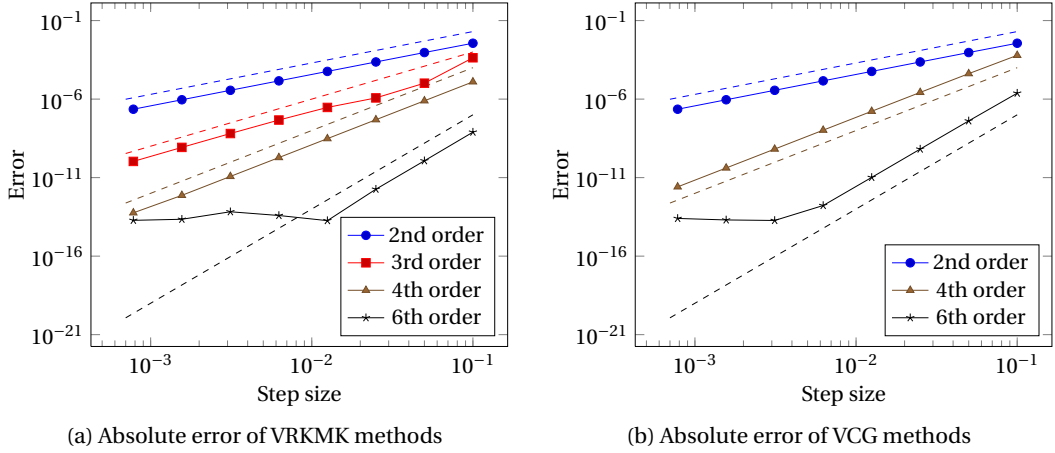

\subsection{Long time behaviour}

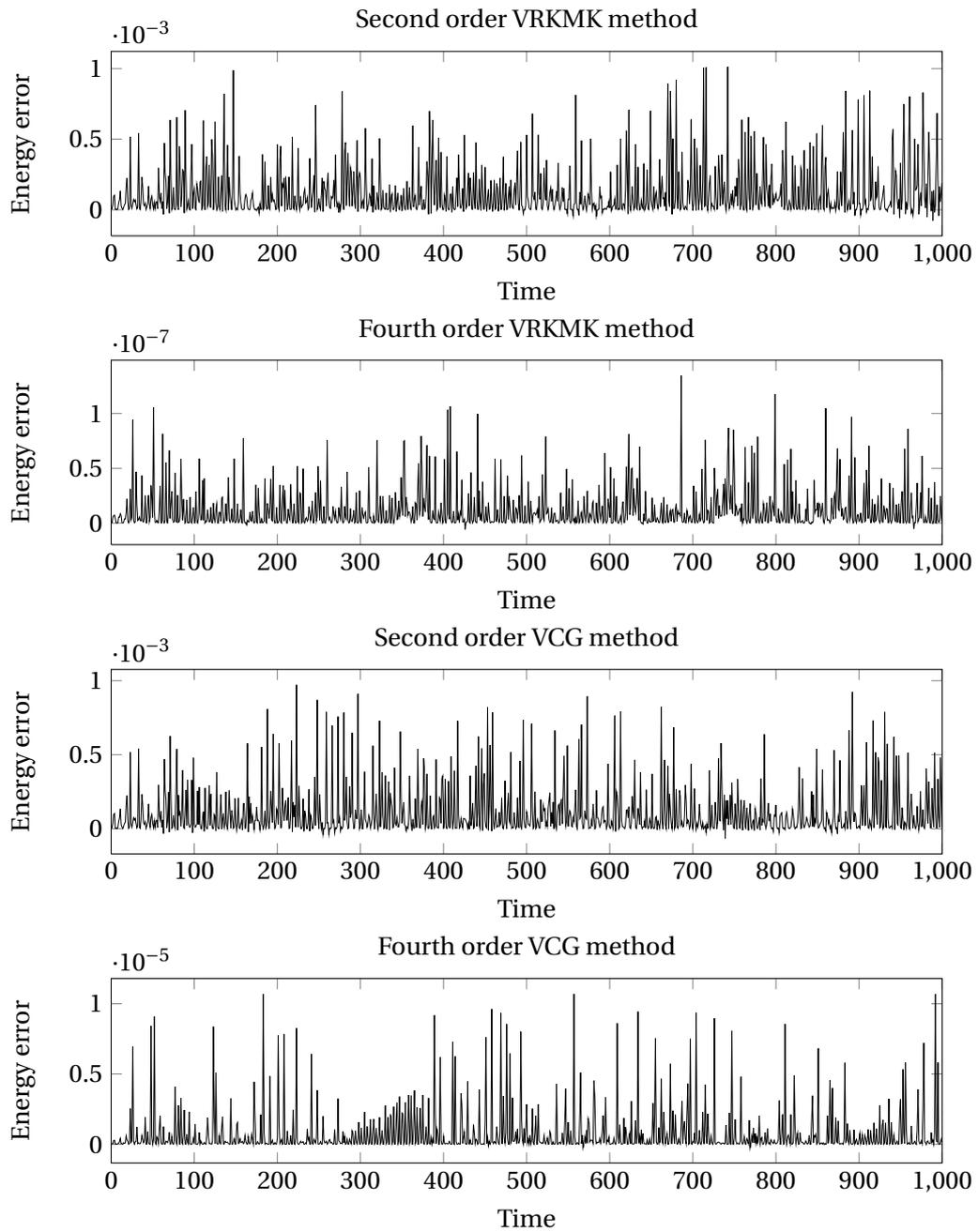
\begin{figure}
\centering
\begin{tikzpicture}
  \begin{axis}[
      width = 0.9\textwidth,
      height = 0.2\textheight,
      xlabel = {Time},
      ylabel = {Energy error},
      xmin=0, xmax=1000,
      title = {Second order VRKMK method}]
    \addplot[very thin, mark=none] table [x = T, y = Herr]  {VRKMKlongrun2skipped.dat};
  \end{axis}
\end{tikzpicture}
\begin{tikzpicture}
  \begin{axis}[
      width = 0.9\textwidth,
      height = 0.2\textheight,
      xlabel = {Time},
      ylabel = {Energy error},
      xmin=0, xmax=1000,
      title = {Fourth order VRKMK method}]
    \addplot[very thin, mark=none] table [x = T, y = Herr]  {VRKMKlongrun4skipped.dat};
  \end{axis}
\end{tikzpicture}
\begin{tikzpicture}
  \begin{axis}[
      width = 0.9\textwidth,
      height = 0.2\textheight,
      xlabel = {Time},
      ylabel = {Energy error},
      xmin=0, xmax=1000,
      title = {Second order VCG method}]
    \addplot[very thin, mark=none] table [x = T, y = Herr]  {VCGlongrun2skipped.dat};
  \end{axis}
\end{tikzpicture}
\begin{tikzpicture}
  \begin{axis}[
      width = 0.9\textwidth,
      height = 0.2\textheight,
      xlabel = {Time},
      ylabel = {Energy error},
      xmin=0, xmax=1000,
      title = {Fourth order VCG method}]
    \addplot[very thin, mark=none] table [x = T, y = Herr]  {VCGlongrun4skipped.dat};
  \end{axis}
\end{tikzpicture}
\caption{Energy error}
\label{fig:longrun}
\end{figure}

The long term behaviour of the methods was also investigated.
In \Fref{fig:longrun}, the energy error of the numerical solution is plotted over the time span $(0, 1000)$.
We have used step size $h=0.01$ ($10^5$ integration steps).
Only the second and fourth order methods were tested on this time span.
As can be seen from the plots, the energy error is small, approximately $10^{-3}$ for both second order methods, and approximately $10^{-7}$ for the fourth order VRKMK method and about $10^{-5}$ for the fourth order VCG method.


\section{Future work}
The reformulation of RKMK methods with a Pad\'{e} approximation of $\dexp^{-1}$ is briefly discussed in \Fref{sec:pade}.
This reformulation makes it possible to eliminate the Lagrange multipliers~$\lambda_i$, which is beneficial for computational efficiency.
We expect that the proof of the order of VRKMK methods, \Fref{thm:orderRKMK}, will carry over to these methods without complications.
Implementation and study of this approach would make the variational RKMK methods more competitive in terms of computational cost. 

Another class of Lie group methods is formed by the commutator-free Lie group methods.
The approach described in this article can easily be used to formulate symplectic Lie group methods based on commutator-free methods.
Formulation, implementation and study of variational commutator-free methods are aspects that can be pursued in the future.

A desirable result would be generalization of these integrators to homogenous spaces.
This has proven to be more difficult than one could hope.
In general, the problem arises due to isotropy.
If $M$ is a homogeneous $G$-space with $\dim(M)<\dim(G)$,
then the infinitesimal action at a point $z\in M$, 
\[\mathfrak{g} \ni \xi \mapsto \frac{\partial}{\partial t}\exp(t\xi)\cdot z \in T_{z}M,\]
is not injective.
Therefore, to identify a vector in $T_zM$ with some element in $\mathfrak{g}$, a choice has to be made.

The main idea of variational integration is to minimize the action.
Inspired by this, one could attempt the following approach, sketched out for a variational method based on the one-stage $\theta$-method for $0\le\theta\le1$.
Assume the action is from the left, and denote the action as $g\cdot q$, and the infinitesimal action as $\xi\cdot q$.
Let $\ell(q,\xi)=L(q, \xi\cdot q)$ be the ``trivialised'' Lagrangian, and use the discrete Lagrangian
\begin{equation}L_h(q_0, q_1)=\min_{\xi}h\ell\bigl(\exp(h\theta \xi)\cdot q_0,\xi\bigr)
\label{eq:homspaceaction}
\end{equation}
where the minimum is taken over all $\xi$ such that $\exp(h\xi) \cdot q_0=q_1$.
If the minimizing equation can be solved, this discrete Lagrangian can be used to construct symplectic integrators.
However, the following example shows that in some cases, the minimizing equation has no solution.
Let $M=\RR$ and the group action that of affine functions $\RR\to\RR$, i.e., for $(a,b)\in (\RR \smallsetminus \{0\})\times \RR=G$, 
$(a,b)\cdot q=aq+b$.
Let the Lagrangian be that of a free particle, $L(q,\dot{q})= \frac{1}{2}\dot{q}^2$.
In this case it turns out that the minimizing problem \Fref{eq:homspaceaction} can be expressed as an unconstrained one-dimensional problem,
\[L_h(q_0,q_1)= \min_{x\in \RR} \frac{h}{2}\biggl(\frac{x \ee^{h\theta x}}{\ee^{hx}-1}\biggr)^2(q_1-q_0)^2.\]
However, this has no solution if $q_1 \ne q_0$, so $L_h(q_0,q_1)$ is not defined.
Furthermore, for $0<\theta<1$, the expression has a \emph{maximizer}, so a naive solution to the extremization problem would return the maximizing solution.
The symplectic method based on such a solution is not even consistent.

\section{Conclusion}
In this article, a set of equations defining symplectic integrators for ODEs on $\coT G$ were presented, as well as two classes of integrators using these equations.
The integrators obtained are formulated intrinsically on $\coT G$, and any drift away from the manifold in numerical solutions is due to round-off errors.
The integrators were developed as variational methods for Lagrangian problems, and are therefore symplectic when applied to Hamiltonian differential equations.
Both classes that were studied, were shown to contain methods of arbitrarily high order, although the computational cost per time step increases with the order.
Effective implementation of the methods has not been a major goal in this article, we have instead focused on the properties of these methods.

The two classes of symplectic methods are based on, respectively, the Runge--Kutta--Munthe-Kaas methods, and the Crouch--Grossman methods.
The methods have a partitioned structure where the position on the Lie group is integrated by the Lie group method while the momentum is integrated by formulae which involve various functions on $\g^{*}$.
We can therefore say that these methods are partitioned Lie group methods and are Lie group methods in a wide understanding of that term.
To the knowledge of the authors, this is the first time that symplectic Lie group methods have been presented and studied in the level of detail done in this article.

\section*{Acknowledgements}

We would like to thank our supervisor, Brynjulf Owren, for encouragement and many helpful discussions.
We would also like to thank Klas Modin for fruitful discussions leading to the writing of this paper.
Finally, we would like to thank the two anonymous referees for helpful comments and suggestions.
The research was supported by the Research Council of Norway, and by a Marie Curie International Research Staff Exchange Scheme Fellowship within the 7th European Community Framework Programme.

\printbibliography

\end{document}